\DeclareMathOperator{\sym}{Sym}
\newcommand{\R}{\mathbb{R}}
\newcommand{\Z}{\mathbb{Z}}
\newcommand{\Q}{\mathbb{Q}}
\newcommand{\spin}{\ifmmode{\rm Spin}\else{${\rm spin}$\ }\fi}
\newcommand{\spinc}{\ifmmode{{\rm Spin}^c}\else{${\rm spin}^c$}\fi}
\newcommand{\nbhd}{\mathcal{N}}
\newcommand{\cut}{\backslash}
\newcommand{\Vast}{\bBigg@{2.5}} 
\newtheorem*{rep@theorem}{\rep@title}
\newcommand{\newreptheorem}[2]{%
	\newenvironment{rep#1}[1]{%
		\def\rep@title{#2 \ref{##1}}%
		\begin{rep@theorem}}%
		{\end{rep@theorem}}}
\newtheoremstyle{thm}{}{}{\itshape}{}{\bfseries}{}{ }{} 
\newtheoremstyle{definition}{}{}{}{}{\bfseries}{}{ }{} 
\theoremstyle{thm}
\newtheorem{Theorem}{Theorem}[section]
\newtheorem{thm}[Theorem]{Theorem}
\newtheorem{lem}[Theorem]{Lemma}
\newtheorem{prop}[Theorem]{Proposition}
\newtheorem*{Theorem-ohne}{Theorem}
\newtheorem{con}[Theorem]{Conjecture}
\newtheorem{ques}[Theorem]{Question}
\newtheorem*{thm:counterexample}{Theorem~\ref{thm:counterexample}}
\newtheorem*{thm:torus_knotsQA}{Theorem~\ref{thm:torus_knotsQA}}
\newtheorem*{thm:QAsurgeries}{Theorem~\ref{thm:QAsurgeries}}
\newtheorem*{thm:asym_QA}{Theorem~\ref{thm:asym_QA}}
\newtheorem*{thm:exceptionalsym}{Theorem~\ref{thm:exceptionalsym}}
\newtheorem*{thm:2uncertain}{Theorem~\ref{thm:2uncertain}}
\newtheorem*{thm:BLknots}{Theorem~\ref{thm:BLknots}}
\theoremstyle{definition}
\newtheorem{rem}[Theorem]{Remark}
\newtheorem{notation}[Theorem]{Notation}
\definecolor{amaranth}{rgb}{0.9, 0.17, 0.31} 
\definecolor{carrotorange}{rgb}{0.93, 0.57, 0.13} 
\definecolor{citrine}{rgb}{0.89, 0.82, 0.04} 
\definecolor{dartmouthgreen}{rgb}{0.05, 0.5, 0.06} 
\definecolor{ballblue}{rgb}{0.13, 0.67, 0.8} 
\definecolor{ceruleanblue}{rgb}{0.16, 0.32, 0.75} 
\definecolor{amethyst}{rgb}{0.6, 0.4, 0.8} 
\definecolor{amber}{rgb}{1.0, 0.75, 0.0} 
\definecolor{burlywood}{rgb}{0.87, 0.72, 0.53} 
\numberwithin{equation}{section}
\begin{document}
	
	
	\title{Two curious strongly invertible L-space knots} 
	
	\author{Kenneth L. Baker}
	\address{Department of Mathematics, University of Miami, Coral Gables, FL 33146, USA}
	\email{k.baker@math.miami.edu}
	
	\author{Marc Kegel}
    \address{Humboldt-Universit\"at zu Berlin, Rudower Chaussee 25, 12489 Berlin, Germany.}
    \address{Ruhr-Universit\"at Bochum, Universit\"atsstra{\ss}e 150, 44780 Bochum, Germany}
    \email{kegemarc@math.hu-berlin.de, kegelmarc87@gmail.com}
	
	\author{Duncan McCoy}
	\address{D\'{e}partment de Math\'{e}matiques, Universit\'{e} du Qu\'{e}bec \`{a} Montr\'{e}al, Canada}
	\email{mc\_coy.duncan@uqam.ca}
	
	
	\date{\today} 
	
\begin{abstract}
    We present two examples of strongly invertible L-space knots whose surgeries are never the double branched cover of a Khovanov thin link in the 3-sphere. Consequently, these knots provide counterexamples to a conjectural characterization of strongly invertible L-space knots due to Watson.
    We also discuss other exceptional properties of these two knots, for example, these two L-space knots have formal semigroups that are actual semigroups.
\end{abstract}

	\keywords{L-space knots, strongly invertible knots, Khovanov homology, Dehn surgery, exceptional surgeries, symmetry-exceptional surgeries} 
	
	\makeatletter
	\@namedef{subjclassname@2020}{%
		\textup{2020} Mathematics Subject Classification}
	\makeatother
	
	\subjclass[2020]{57K10; 57R65, 57R58, 57K16, 57K14, 57K32, 57M12} 
	
	
	\maketitle
	
	
\section{Introduction}
A \textit{strong inversion} on a knot $K$ in $S^3$ is an element $\Phi \in \sym(S^3, K)$ defined by an orientation-preserving involution on $S^3$ that reverses the orientation of $K$. Using Khovanov homology,\footnote{In this paper, we work with reduced Khovanov homology groups with $\Z_2$-coefficients.} Watson introduced an invariant, $\varkappa(K,\Phi)$ of a knot $K$ equipped with a strong inversion $\Phi$ \cite{Watson_KH_sym2017}. The invariant $\varkappa(K, \Phi)$ is a finite-dimensional vector space with an absolute homological grading $h$ and a relative quantum $\Z$-grading $q$. Using this invariant Watson gave a conjectural characterization of strongly invertible L-space knots.\footnote{Here an \textit{L-space knot} is a knot $K$ that admits a surgery to a Heegaard Floer L-space.}

\begin{con}[{\cite[Conjecture~30]{Watson_KH_sym2017}}]\label{con:watson}
    A non-trivial knot $K$ admitting a strong inversion $\Phi$ is an L-space knot if and only if $\varkappa(K, \Phi)$ is supported in a single diagonal grading $\delta = q -2h$.
\end{con}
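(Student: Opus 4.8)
This statement is an equivalence, so a proof would need both implications; I would organize the attack around the branched-cover picture that underlies Watson's construction of $\varkappa$.

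\textbf{The ``if'' direction} --- thinness of $\varkappa$ forcing $K$ to be an L-space knot --- should be the tractable one, essentially built into the construction. A strong inversion $\Phi$ realizes the exterior of $K$ as the double cover of $B^3$ branched over a two-string tangle, so Dehn filling exhibits each surgery $S^3_{p/q}(K)$ as the double branched cover of $S^3$ along a ``quotient link'' $L_{p/q}$ read off from that tangle, and $\varkappa(K,\Phi)$ is assembled from the reduced Khovanov homology of the $L_{p/q}$ with their skein exact sequences matched against the Heegaard Floer surgery triangle. If $\varkappa$ is supported on a single $\delta$-diagonal, then the $L_{p/q}$ are Khovanov-thin, hence their branched double covers --- the surgeries on $K$ --- are L-spaces, so $K$ is an L-space knot. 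The only work is in making the correspondence between the grading on $\varkappa$ and the Khovanov grading of the quotient links precise; I do not expect a serious obstacle here.

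\textbf{The ``only if'' direction} --- $K$ an L-space knot forcing $\varkappa(K,\Phi)$ thin --- is where I expect the genuine difficulty, and where I would be prepared for the conjecture itself to fail. The natural approach is to leverage the rigidity of L-space knots: the knot Floer complex of $K$ is a staircase determined by the Alexander polynomial, equivalently by the formal semigroup of $K$, and one would hope to propagate this rigidity through the branched-cover picture to pin the Khovanov homology of every quotient link onto two adjacent diagonals --- say by comparing the reduced Khovanov-to-Heegaard-Floer spectral sequence of a quotient link with the mapping-cone surgery formula and arguing that an L-space target leaves no room for an extra diagonal. I see no structural reason this should go through: there is no general principle by which a purely Heegaard-Floer constraint on $K$ must manifest as thinness of a Khovanov-homological invariant of auxiliary links. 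I would therefore instead hunt for a counterexample among strongly invertible L-space knots that are neither torus knots nor cables --- for which thinness presumably does hold --- such as hyperbolic L-space knots obtained by surgery on chain links or Whitehead-type links, computing $\varkappa$ directly from their quotient links in search of one that spans two $\delta$-diagonals (equivalently, whose surgeries are never double branched covers of Khovanov-thin links). Certifying such an example is itself delicate: the statement hides a quantifier, in that one must rule out \emph{every} presentation of the surgeries of $K$ as a branched double cover of a thin link --- not merely the one coming from the chosen strong inversion $\Phi$, and so excluding thin links that might arise from other tangle decompositions or other symmetries of the surgered manifolds. That non-existence step is, I expect, the real crux.
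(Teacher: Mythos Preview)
The statement is a conjecture, not a theorem, and the paper's main contribution is to \emph{disprove} the ``only if'' direction: the knots $K_1$ and $K_2$ are strongly invertible L-space knots with $\varkappa$ supported in two $\delta$-gradings (Theorem~\ref{thm:counterexample_Kappa}). So there is no proof in the paper to compare against. Read as a plan of attack, though, your proposal is strikingly well calibrated on that direction: you correctly suspect it may fail, you propose to search among hyperbolic L-space knots, and you isolate the genuine crux --- that certifying a counterexample requires ruling out \emph{every} presentation of the surgeries as branched double covers of thin links, not just the one coming from $\Phi$. This is exactly what the paper does: $\varkappa$ is computed from the tangle fillings $T(n)$ (Proposition~\ref{prop:str_inv}), and then your ``hidden quantifier'' is discharged by showing that any alternative branched-cover description must arise at an exceptional or symmetry-exceptional slope, of which there are finitely many by a Futer--Purcell--Schleimer length bound, and checking each such slope individually (Lemmas~\ref{lem:excepto9_30634} and~\ref{lem:symexcepto9_30634}).

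Your ``if'' direction, however, contains a real gap. The step ``$\varkappa$ supported on one diagonal $\Rightarrow$ the $L_{p/q}$ are Khovanov-thin'' does not follow from the construction: $\varkappa$ is a \emph{subquotient} of the inverse system $(Kh(T(n)),f_n)$, so its width is bounded above by the width of the $T(n)$, not below. Nothing in the definition prevents the individual fillings from carrying generators in extra $\delta$-gradings that cancel in the limit, and you offer no mechanism to exclude this. The paper does not regard this implication as tractable; it explicitly records it as a separate, still-open conjecture at the end of the introduction.
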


The main result of this article is a pair of counterexamples to one of the implications in this conjecture. Namely, we exhibit two L-space knots, $K_1$ and $K_2$ depicted in Figure~\ref{fig:example_knots}, for which $\varkappa$ is not thin.
 
\begin{thm}\label{thm:counterexample_Kappa}
    There exist two strongly invertible L-space knots, 
    $K_1$ and $K_2$ with unique strong inversions $\Phi_1$ and $\Phi_2$,
    such that $\varkappa(K_i, \Phi_i)$ is supported in two distinct $\delta$-gradings. In particular,\footnote{In this article, we adopt the convention that the relative $q$-grading and thus also the diagonal $\delta$-grading of $\varkappa(K,\Phi)$ are normalized by the Seifert longitude of $K$, see Section~\ref{sec:Kappa} for details. In the table presenting $\varkappa$ we record the non-vanishing dimensions of the graded vector spaces in $\varkappa(K,\Phi)$.}
   \[
\begin{tikzpicture}[scale=0.85]
\draw [black] (-2.2,0) -- (-2.2,1.5);
\draw [black] (-1,0) -- (-1,1.5);
\draw [gray] (-0.5,0) -- (-0.5,1.5);
\draw [gray] (0,0) -- (0,1.5);
\draw [gray] (0.5,0) -- (0.5,1.5);
\draw [gray] (1,0) -- (1,1.5);
\draw [gray] (1.5,0) -- (1.5,1.5);
\draw [gray] (2,0) -- (2,1.5);
\draw [gray] (2.5,0) -- (2.5,1.5);
\draw [gray] (3,0) -- (3,1.5);
\draw [gray] (3.5,0) -- (3.5,1.5);
\draw [black] (4,0) -- (4,1.5);

\draw [black] (-2.2,1.5) -- (4,1.5);
\draw [black] (-2.2,1) -- (4,1);
\draw [gray] (-2.2,0.5) -- (4,0.5);
\draw [black] (-2.2,0) -- (4,0);

\node at (-3.5,0.75) {$\varkappa(K_1,\Phi_1)=$};

\node at (-1.58,0.75) {\footnotesize{$\delta=17$}}; 
\node at (-1.58,0.25) {\footnotesize{$\delta=15$}}; 

\node at (-1.83,1.25) {\footnotesize{$h=$}};

\node at (-0.75,1.25) {\footnotesize{-$9$}}; 
\node at (-0.25,1.25) {\footnotesize{-$8$}}; 
\node at (0.25,1.25) {\footnotesize{-$7$}}; 
\node at (0.75,1.25) {\footnotesize{-$6$}}; 
\node at (1.25,1.25) {\footnotesize{-$5$}}; 
\node at (1.75,1.25) {\footnotesize{-$4$}}; 
\node at (2.25,1.25) {\footnotesize{-$3$}}; 
\node at (2.75,1.25) {\footnotesize{-$2$}}; 
\node at (3.25,1.25) {\footnotesize{-$1$}}; 
\node at (3.75,1.25) {\footnotesize{$0$}}; 

\node at (-0.75,.75) {$1$}; 
\node at (-0.25,.75) {$2$}; 
\node at (0.25,.75) {$3$}; 
\node at (0.75,.75) {$4$}; 
\node at (1.25,.75) {$4$}; 
\node at (1.75,.75) {$4$}; 
\node at (2.25,.75) {$3$}; 
\node at (2.75,.75) {$2$}; 
\node at (3.25,.75) {$1$}; 

\node at (1.25,.25) {$1$}; 
\node at (2.25,.25) {$1$}; 
\node at (2.75,.25) {$1$}; 
\node at (3.75,.25) {$1$}; 
\end{tikzpicture}\]
\[
\begin{tikzpicture}[scale=0.85]
\draw [black] (-2.2,0) -- (-2.2,1.5);
\draw [black] (-1,0) -- (-1,1.5);
\draw [gray] (-0.5,0) -- (-0.5,1.5);
\draw [gray] (0,0) -- (0,1.5);
\draw [gray] (0.5,0) -- (0.5,1.5);
\draw [gray] (1,0) -- (1,1.5);
\draw [gray] (1.5,0) -- (1.5,1.5);
\draw [gray] (2,0) -- (2,1.5);
\draw [gray] (2.5,0) -- (2.5,1.5);
\draw [gray] (3,0) -- (3,1.5);
\draw [gray] (3.5,0) -- (3.5,1.5);
\draw [gray] (4,0) -- (4,1.5);
\draw [black] (4.5,0) -- (4.5,1.5);

\draw [black] (-2.2,1.5) -- (4.5,1.5);
\draw [black] (-2.2,1) -- (4.5,1);
\draw [gray] (-2.2,0.5) -- (4.5,0.5);
\draw [black] (-2.2,0) -- (4.5,0);

\node at (-3.5,0.75) {$\varkappa(K_2,\Phi_2)=$};

\node at (-1.58,0.75) {\footnotesize{$\delta=17$}}; 
\node at (-1.58,0.25) {\footnotesize{$\delta=15$}}; 

\node at (-1.83,1.25) {\footnotesize{$h=$}};

\node at (-0.75,1.25) {\footnotesize{-$6$}}; 
\node at (-0.25,1.25) {\footnotesize{-$5$}}; 
\node at (0.25,1.25) {\footnotesize{-$4$}}; 
\node at (0.75,1.25) {\footnotesize{-$3$}}; 
\node at (1.25,1.25) {\footnotesize{-$2$}}; 
\node at (1.75,1.25) {\footnotesize{-$1$}}; 
\node at (2.25,1.25) {\footnotesize{$0$}}; 
\node at (2.75,1.25) {\footnotesize{$1$}}; 
\node at (3.25,1.25) {\footnotesize{$2$}}; 
\node at (3.75,1.25) {\footnotesize{$3$}}; 
\node at (4.25,1.25) {\footnotesize{$4$}}; 

\node at (-0.75,.75) {$1$}; 
\node at (-0.25,.75) {$2$}; 
\node at (0.25,.75) {$3$}; 
\node at (0.75,.75) {$4$}; 
\node at (1.25,.75) {$4$}; 
\node at (1.75,.75) {$4$}; 
\node at (2.25,.75) {$3$}; 
\node at (2.75,.75) {$2$}; 
\node at (3.25,.75) {$1$}; 

\node at (1.25,.25) {$1$};
\node at (1.75,.25) {$1$}; 
\node at (2.25,.25) {$1$}; 
\node at (2.75,.25) {$2$}; 
\node at (3.25,.25) {$1$}; 
\node at (3.75,.25) {$1$}; 
\node at (4.25,.25) {$1$}; 
\end{tikzpicture}\]
\end{thm}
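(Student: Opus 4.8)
The plan is to prove the statement by a direct computation, performed for $K_1$ and $K_2$ in turn, with four ingredients.

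\emph{The knots are L-space knots.} From the diagrams in Figure~\ref{fig:example_knots} we will extract explicit presentations of $K_1$ and $K_2$. To establish that each is an L-space knot, the cleanest route is to compute the knot Floer homology $\widehat{HFK}(K_i)$ and check that it is a staircase complex --- rank one in each non-zero Alexander grading, with the gradings arranged in the standard zig-zag --- so that $K_i$ is an L-space knot by the Ozsv\'ath--Szab\'o characterization; alternatively we would locate a slope $p/q$ for which $S^3_{p/q}(K_i)$ is a Seifert fibered or graph manifold whose Seifert data meet the usual L-space criterion. This step also yields the remark on formal semigroups: reading the gap set off the Alexander polynomial $\Delta_{K_i}(t)$ and checking that it is closed under addition shows the formal semigroup of $K_i$ to be a genuine numerical semigroup.

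\emph{The strong inversions and their uniqueness.} Each diagram will be drawn symmetrically about an axis whose $\pi$-rotation reverses the orientation of the knot, which exhibits a strong inversion $\Phi_i$. For uniqueness we will determine $\sym(S^3,K_i)$ in full: the knots are hyperbolic (as one checks with SnapPy), so $\sym(S^3,K_i)$ is the isometry group of the complement $S^3\setminus K_i$, and we verify that it contains exactly one conjugacy class of orientation-preserving involutions reversing the orientation of $K_i$.

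\emph{Computation of $\varkappa$.} As recalled in Section~\ref{sec:Kappa}, the quotient of $(S^3,K_i)$ by $\Phi_i$ is again $S^3$, in which the fixed axis descends to an unknot and $K_i$ to an arc with both endpoints on that unknot; the two natural ways of closing up this arc along the axis produce the associated links, and $\varkappa(K_i,\Phi_i)$ is assembled from their reduced $\Z_2$-Khovanov homology, with the $q$-grading (hence the $\delta$-grading) normalized by the Seifert longitude of $K_i$. Starting from the symmetric diagrams we will produce explicit diagrams of these links, compute the relevant Khovanov homology by machine, and record the non-vanishing bigraded dimensions; this should reproduce the two tables in the statement. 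Since in both tables the non-zero entries occur in, and genuinely populate, the two rows $\delta=17$ and $\delta=15$, it follows that $\varkappa(K_i,\Phi_i)$ is supported in two distinct diagonal gradings, which is the assertion.

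The substantive part is the combination of the first and third ingredients. The associated links are large enough that the Khovanov homology computation spans about ten homological gradings, so it must be organized carefully and cross-checked --- for instance against the graded Euler characteristic, and against an independent run on an alternative diagram of each associated link. Likewise, proving that $K_1$ and $K_2$ are L-space knots requires genuine input, either the full knot Floer computation or the surgery above; the mere fact that $\Delta_{K_i}(t)$ has the numerical shape of an L-space knot's Alexander polynomial would not suffice.
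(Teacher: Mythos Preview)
Your overall strategy matches the paper's: exhibit the strong inversion from the symmetric diagram, verify its uniqueness via the SnapPy symmetry group, confirm the L-space property, and then compute $\varkappa$ by machine. The paper takes the L-space property from Dunfield's census data rather than a direct $\widehat{HFK}$ computation, but either of your proposed routes would work just as well.

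The genuine gap is in your description of how $\varkappa$ is actually computed. You write that ``the two natural ways of closing up this arc along the axis produce the associated links'' and that $\varkappa$ is ``assembled from their reduced $\Z_2$-Khovanov homology''. That is not the construction. The quotient tangle $T$ has an entire $\Z$-indexed family of integer fillings $T(n)$, and $\varkappa$ is the inverse limit of the system $(Kh(T(n)),f_n)_{n\in\Z}$ modulo the subspace of eventually-zero sequences; the two ``natural'' closures are $T(\infty)$ (the unknot) and $T(0)$, and these two alone do not determine $\varkappa$. In practice one computes $Kh(T(n))$ for a range of consecutive integers until one locates the integer $N$ at which the maps $f_n$ switch from surjective ($n>N$) to injective ($n\le N$); $\varkappa$ is then the image of $f_N\circ f_{N+1}\colon Kh(T(N+1))\to Kh(T(N-1))$, which one reads off by comparing the bigraded dimensions of $Kh(T(N\pm 1))$. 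The paper carries this out explicitly, finding $N=20$ for $K_1$ and $N=16$ for $K_2$, and tabulating $Kh(T(n))$ for five consecutive $n$ around $N$ in each case (Figure~\ref{fig:Kappa}). Your plan needs to be amended to compute this family of Khovanov homologies and to identify $N$; only then can the tables in the statement be extracted.
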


\begin{figure}[htbp]
    \centering
    \includegraphics[width=0.42\textwidth]{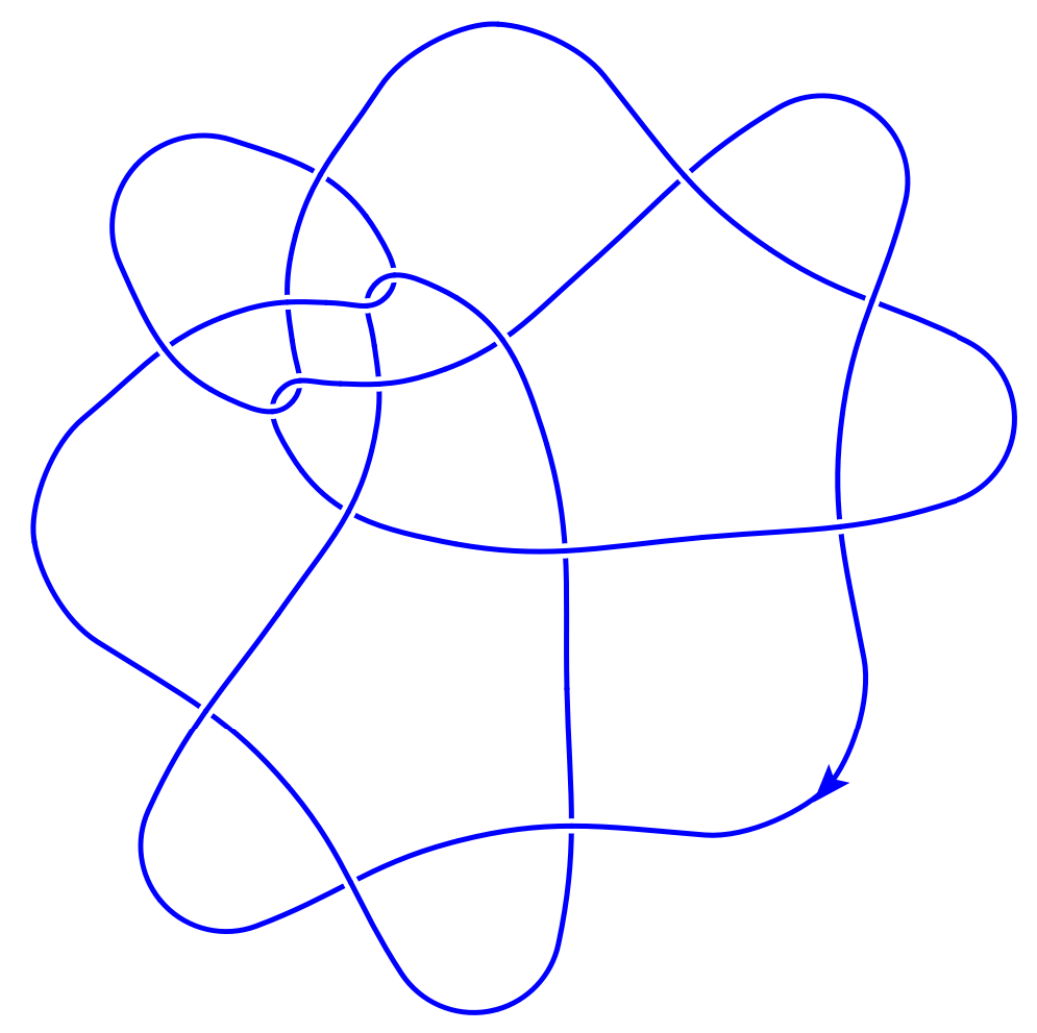}
    \includegraphics[width=0.56\textwidth]{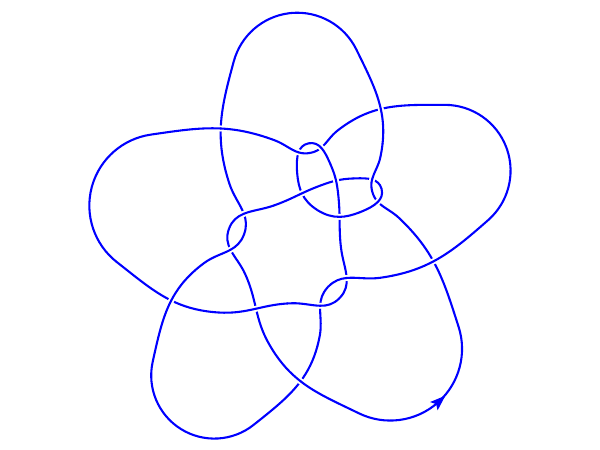}
    \caption{Strongly invertible diagrams for $K_1$ (left) and $K_2$ (right).}
    \label{fig:example_knots}
\end{figure}

In fact, these knots $K_1$ and $K_2$ turn out to have a stronger property. We say that a surgery on a knot is (Khovanov) \textit{thin} if it is the double branched cover of a knot or link with thin Khovanov homology. Conjecture~\ref{con:watson} implies that if $K$ is a strongly invertible L-space knot, then every sufficiently large $r\in \Q$ would be a thin surgery slope. The knots $K_1$ and $K_2$, however, do not admit any thin surgeries.
	
\begin{thm}\label{thm:counterexample_thin}
For all $r\in \Q$, performing $r$-surgery on $K_1$ or $K_2$ never yields the double branched cover of a knot or link with thin Khovanov homology.
\end{thm}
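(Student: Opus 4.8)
The plan is to obstruct thin surgeries at the level of the closed $3$--manifold $S^3_r(K_i)$ itself, rather than the specific links coming from the strong inversion, since the existence of \emph{some} thin surgery is strictly stronger than $\varkappa$ failing to be $\delta$--thin. First I would reduce to L--space surgeries. If $S^3_r(K_i)=\Sigma(L)$ for a link $L$ with thin reduced Khovanov homology over $\Z_2$, then the Ozsv\'ath--Szab\'o spectral sequence from $\widetilde{Kh}(\overline L;\Z_2)$ to $\widehat{\HF}(\Sigma(L);\Z_2)$ gives $\rk\,\widehat{\HF}(\Sigma(L))\le \rk\,\widetilde{Kh}(\overline L)=|\det L|=|H_1(\Sigma(L))|$, while $\rk\,\widehat{\HF}\ge|H_1|$ always; hence $S^3_r(K_i)$ must be an L--space. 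Since $K_i$ is a nontrivial L--space knot with $g(K_i)\ge 1$, this forces $r=p/q$ with $p,q>0$ and $p/q\ge 2g(K_i)-1$, with $p=|\det L|$ (the non-positive slopes being disposed of by routine considerations).

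Next I would exploit the special structure of branched double covers of thin links. Because $L$ is thin exactly when its mirror $\overline L$ is, both $\Sigma(L)$ and $-\Sigma(L)=\Sigma(\overline L)$ are L--spaces whose correction terms are ``of alternating type'': they are computed by a sharp negative--definite filling, as established by Ozsv\'ath--Szab\'o for alternating links and extended to the quasi--alternating and thin settings. Gluing a sharp negative--definite filling of $S^3_{p/q}(K_i)$ to one of $-S^3_{p/q}(K_i)$ yields a closed negative--definite $4$--manifold, so Donaldson's diagonalisation theorem forces the linking form together with the correction terms of $S^3_{p/q}(K_i)$ to be realised by an embedding of lattices into a standard diagonal lattice; for a surgery along a knot this is precisely Greene's changemaker condition. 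Thus it suffices to rule out this condition for every admissible slope $p/q$.

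Finally I would compute the correction terms of $S^3_{p/q}(K_i)$ from the Alexander polynomial of $K_i$. These are L--space knots whose Alexander polynomials -- equivalently their (formal, but here genuine) semigroups -- are pinned down, and the Ni--Wu mapping--cone surgery formula then expresses each $d\bigl(S^3_{p/q}(K_i),\cdot\bigr)$ through the torsion coefficients $t_j(K_i)$ as a controlled modification of the lens--space values $d(L(p,q),\cdot)$. Substituting these into the changemaker / negative--definite inequalities above produces a contradiction. The main obstacle is that a priori infinitely many slopes $p/q\ge 2g(K_i)-1$ must be treated simultaneously; this is handled by observing that the deviation of the correction terms from the lens--space model is governed by the finitely many bounded quantities $t_j(K_i)$, so only changemaker vectors of bounded norm can occur, which collapses the set of potential slopes to a finite list that is then checked directly. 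This is also where the precise arithmetic of $\Delta_{K_i}$ is essential rather than merely the fact that $K_i$ is an L--space knot: an L--space knot possessing a lens--space surgery always admits a thin surgery, since lens spaces are branched double covers of two--bridge links, so any valid argument must see more than just the L--space property. (For context, this strengthens the failure of one implication in Conjecture~\ref{con:watson} observed via $\varkappa$ in \cite{Watson_KH_sym2017}.)
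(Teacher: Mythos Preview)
Your strategy is entirely different from the paper's, and it contains a genuine gap at the crucial step. You assert that for a Khovanov-thin link $L$ the manifold $\Sigma(L)$ (with either orientation) bounds a sharp negative-definite $4$--manifold, ``as established by Ozsv\'ath--Szab\'o for alternating links and extended to the quasi-alternating and thin settings.'' The alternating and quasi-alternating cases are indeed in the literature, but no such extension to arbitrary Khovanov-thin links is known. The sharp filling in the alternating case comes from the Goeritz form of an alternating diagram, and in the QA case from the inductive definition; Khovanov thinness by itself supplies neither. Since thin is strictly weaker than quasi-alternating, your argument --- even if the subsequent changemaker analysis went through --- would at best show that $K_1$ and $K_2$ have no quasi-alternating surgeries, which is weaker than Theorem~\ref{thm:counterexample_thin}. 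A smaller issue is already present in your first reduction: the equality $\rk\widetilde{Kh}(\overline L)=|\det L|$ does not follow formally from $\delta$-thinness (thinness only forces the Ozsv\'ath--Szab\'o spectral sequence to degenerate, giving $\rk\widetilde{Kh}=\rk\widehat{\HF}(\Sigma(L))$, which is the wrong inequality for concluding the L-space property).

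By contrast, the paper's proof is geometric and never touches $d$--invariants. Proposition~\ref{prop:str_inv} shows every tangle filling $T_i(r)$ has Khovanov width $2$, so the canonical branching set is never thin. Thurston's hyperbolic Dehn surgery theorem then reduces the problem to finitely many slopes: those where $K_i(r)$ is non-hyperbolic, and those where $K_i(r)$ is hyperbolic but has extra symmetries. The exceptional fillings are small Seifert fibered or a graph manifold glued from two trefoil exteriors; Lemmas~\ref{lem:sfs_branching} and~\ref{lem:dbcgivingunionofknotexteriors} show their branching sets in $S^3$ are exactly $T_i(r)$ or mutants thereof. The symmetry-exceptional slopes are bounded effectively via Futer--Purcell--Schleimer (Lemma~\ref{lem:slope}), and for each such slope the additional involutions are shown by direct computation to have quotient a nontrivial lens space rather than $S^3$. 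In short, the paper enumerates all possible branching sets rather than obstructing a hypothetical one Floer-theoretically.
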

	
The knots $K_1$ and $K_2$ admit several descriptions. In Burton's notation they are $K_1=17nh0000014$ and $K_2=17nh0000019$, indicating they both have crossing number $17$, are hyperbolic, and non-alternating~\cite{Burton}. 
The complements of $K_1$ and $K_2$ also appear as manifolds in the SnapPy census~\cite{Du18}. The complements of $m K_1$ and $ m K_2$ are homeomorphic to $t09847$ and $o9\_30634$, respectively.
As braid closures, $K_1$ and $K_2$ are presented by 
\begin{align*}
K_1 &=[(2,1,3,2)^3,1,2,3,3,2] \\
K_2&=[(2,1,3,2)^3,-1,2,1,1,2].
\end{align*}
The knots $K_1$ and $K_2$ were identified as counter-examples to Conjecture~\ref{con:watson} following a systematic study of Dunfield's SnapPy census knots~\cite{Du18}, i.e.\ the hyperbolic knots in $S^3$ whose complements admit ideal triangulations of at most $9$ tetrahedra. It was shown by Dunfield~\cite{Du19}, cf.~\cite{BKM_QA} that approximately half of the census knots are L-space knots, making the SnapPy census a rich source of potentially interesting L-space knots. After an exploration of surgeries on the census L-space knots, we found that $K_1$ and $K_2$ were the only census L-space knots that did not appear to have any alternating or quasi-alternating surgeries \cite{BKM_alt, BKM_QA}, leaving them as the only possible counterexamples to Conjecture~\ref{con:watson} amongst the SnapPy census knots.

Coincidentally, $K_2$ is the only hyperbolic L-space knot that is known to not be braid positive~\cite{BakerKegel}.
Another interesting observation is that $K_1$ and $K_2$ are also the only two L-space knots in the SnapPy census whose formal semi-groups are actual semi-groups.  All other currently known hyperbolic L-space knots whose formal semi-groups are semi-groups are contained in the infinite families from~\cite{BakerKegel,Teragaito}. (The notion of a formal semi-group of an L-space knot was introduced in~\cite{formal_semi_group}. We refer to that article for the precise definition.)  Adapting Watson's conjecture, we ask: 
\begin{ques}
    Let $K$ be a strongly invertible hyperbolic L-space knot. Is every sufficiently large slope of $K$ a thin slope if and only if the formal semi-group of $K$ is not a semi-group?
\end{ques}

By Theorem~\ref{thm:counterexample_Kappa}, the knots $K_1$ and $K_2$ have $\varkappa(K_i, \Phi_i)$ of width two. It is natural to wonder if this can be improved upon.

\begin{ques}
Given $n\geq 3$, does there exist a strongly invertible L-space knot $(K,\Phi)$, such that $\varkappa(K,\Phi)$ has width at least $n$?
\end{ques}

The $\varkappa$-invariant of a strongly invertible knot $K$ with a unique strong inversion $\Phi$ is determined by the Khovanov homology of the integer tangle fillings $T(n)$ of the tangle exterior $T$ of $K$. On the other hand, it is known that there are only finitely many slopes for $K$ such that $K(r)$ is the double branched cover of a link $L\neq T(r)$. Any such slope is necessarily an exceptional or symmetry-exceptional slope of $K$ (see the proof of Theorem~\ref{thm:counterexample_thin}). One might wonder if such an \textit{exceptional} link $L$ might be thin without the corresponding tangle filling $T(r)$ being thin. Lemma~\ref{lem:sfs_branching} below shows that this cannot be the case if $K(r)$ is a Seifert fibered space.

\begin{ques}
    Does there exist a strongly invertible L-space knot $(K,\Phi)$ with a thin surgery such that $\varkappa(K,\Phi)$ is not thin?
\end{ques}

Note also that one direction of Conjecture~\ref{con:watson} remains open.
\begin{con}
    Let $K$ be a knot admitting a strong inversion $\Phi$ such that $\varkappa(K, \Phi)$ is supported in a single diagonal grading $\delta = q -2h$. Then $K$ is an L-space knot.
\end{con}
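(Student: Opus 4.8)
This is the open direction of Watson's conjecture, so what follows is a strategy rather than a finished proof. The plan is to upgrade $\delta$-thinness of $\varkappa(K,\Phi)$ to thinness of every integer tangle filling $T(n)$, and then to run the Ozsv\'ath--Szab\'o branched-cover spectral sequence to exhibit infinitely many L-space surgeries on $K$.

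I would first record the branched-cover conclusion, which is entirely standard once thinness of the fillings is known. For the strong inversion $\Phi$, the Montesinos trick realizes the integer surgery $K(n)$ as the double branched cover $\Sigma(T(n))$ for every $n$; the finitely many exceptional and symmetry-exceptional slopes noted above only affect whether $K(n)$ also bounds some \emph{other} thin link, which is irrelevant to this direction. Now suppose $T(n)$ is Khovanov thin. Then $\dim_{\Z_2}\widetilde{\KH}(T(n))=\det(T(n))=|H_1(K(n))|=|n|$, while the Ozsv\'ath--Szab\'o spectral sequence from $\widetilde{\KH}(T(n))$ to $\widehat{\HF}(\Sigma(T(n)))$ yields $\dim\widehat{\HF}(K(n))\le\dim\widetilde{\KH}(T(n))$. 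Combined with the universal lower bound $\dim\widehat{\HF}(K(n))\ge |H_1(K(n))|$, this forces equality for every $n\neq 0$, so each such $K(n)$ is an L-space. In particular $K$ admits arbitrarily large integral L-space surgeries and is therefore an L-space knot.

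The entire content of the conjecture is thus the implication that $\delta$-thinness of $\varkappa(K,\Phi)$ forces each filling $T(n)$ to be Khovanov thin. Since $\varkappa$ is determined by the reduced Khovanov homology of the fillings, the natural tool is the immersed-curve model for the Khovanov homology of four-ended tangles: $\widetilde{\KH}(T)$ is encoded by a collection of $\delta$-graded immersed curves in the four-punctured sphere, and $\widetilde{\KH}(T(n))$ is the Lagrangian intersection pairing of these curves with the slope-$n$ curve supplied by the filling. Under this dictionary one expects $\delta$-thin $\varkappa$ to correspond to every tangle curve lying in a single $\delta$-diagonal, and the pairing of such a curve with the (thin) rational filling curve to produce intersection points concentrated in one $\delta$-grading, up to an $n$-dependent overall shift. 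Making this precise is exactly where the difficulty lies: one must show that the pairing is $\delta$-graded and that the winding of the tangle curves cannot spread the intersection gradings across more than one diagonal, \emph{uniformly in $n$}, and one must confirm that $\varkappa$-thinness controls the full reduced Khovanov homology of each filling rather than merely a summand or a stabilized piece. This grading-control step is the main obstacle and is precisely the part of Watson's conjecture that remains unresolved; every Heegaard Floer ingredient in the previous paragraph is a black box by comparison.
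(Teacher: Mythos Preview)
The paper does not prove this statement; it records it as the surviving, still-open direction of Watson's conjecture after the counterexamples $K_1$ and $K_2$ dispose of the other direction. There is therefore no ``paper's own proof'' to compare against.

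Your proposal is honest about being a strategy rather than a proof, and the reduction you give is sound as far as it goes. If some $T(n)$ with $n\neq 0$ is Khovanov thin, then $\dim\widetilde{Kh}(T(n))=\det T(n)=|H_1(K(n))|=|n|$, and the Ozsv\'ath--Szab\'o spectral sequence together with the lower bound $\dim\widehat{\HF}(K(n))\ge |H_1(K(n))|$ forces $K(n)$ to be an L-space; hence $K$ is an L-space knot. Note that you only need a \emph{single} nonzero integer $n$ with $T(n)$ thin, not all of them, so the target implication is slightly weaker than you state: $\varkappa$ thin $\Rightarrow$ some $T(n)$ thin would already suffice.

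The genuine gap is exactly the one you flag: passing from $\delta$-thinness of $\varkappa(K,\Phi)$ to thinness of any filling $T(n)$. Since $\varkappa$ is by construction only a subquotient of each $Kh(T(n))$ (the image of $f_N\circ f_{N+1}$), thinness of $\varkappa$ does not obviously constrain the full width of $Kh(T(n))$. The paper invokes \cite[Lemma~4.10 and Proposition~5.2]{Watson2012} to compute the width of every $T(r)$ from a finite calculation, so a natural first step would be to check whether those width results already yield $w_{Kh}(T(n))=w(\varkappa)$ in general, or at least the inequality $w_{Kh}(T(n))\le w(\varkappa)$ you need. Your immersed-curve heuristic is plausible but, as you say, the grading control is exactly where a proof is missing; absent that, the proposal remains a program and not a proof, consistent with the paper's assessment that this direction is open.
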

	
\subsection*{Code and data}
The code and additional data accompanying this paper can be accessed at~\cite{BKM}.

\subsection*{Acknowledgments}
We thank Dave Futer, Marco Golla, Liam Watson, and Clau\-di\-us Zibrowius for useful discussions, explanations, and remarks. For the computations performed in this paper, we use and combine KLO (Knot like objects)~\cite{Sw}, KnotJob~\cite{KnotJob}, the Mathematica knot theory package~\cite{knotatlas}, Regina~\cite{BBP+}, sage~\cite{sagemath}, SnapPy \cite{CDGW}, and code developed by Dunfield in~\cite{Du18,Du19}. We thank the creators for making these programs publicly available. 

Special thanks go to Claudius Zibrowius who verified our computations of the $\varkappa$-invariant of $K_1$ and $K_2$ using his program \textit{kht++}\cite{khtpp}. These calculations can be accessed at~\cite{khtpp}. This check revealed a mistake in the diagram and braid words presenting $K_1$ given in a previous version of this article.

We are also grateful to the anonymous referee for their helpful comments and thoughtful feedback on this article.

KLB was partially supported by the Simons Foundation grant \#523883 and gift \#962034.
 
MK was supported by the SFB/TRR 191 \textit{Symplectic Structures in Geometry, Algebra and Dynamics}, funded by the DFG (Projektnummer 281071066 - TRR 191).

DM is supported by an NSERC Discovery Grant and a Canada Research Chair.
	
\section{Khovanov homology and the \texorpdfstring{$\varkappa$}{K}-invariant.}\label{sec:Kappa}
In this paper, we work with reduced Khovanov homology $ {Kh}^{h,q}(L)$ with $\Z_2$-coefficients, where $h$ and $q$ denote the homological and quantum gradings, respectively. We also write ${Kh}(L)$ for $\oplus_{h,q}{Kh}^{h,q}(L)$. It is also convenient to define the diagonal $\delta$-grading $\delta=q-2h$. The (Khovanov homology) \textit{width} of a link $L$ is
\[
w_{Kh}(L)=\frac{1}{2}(\delta_{\rm max}-\delta_{\rm min}) + 1
\]
where $\delta_{\rm max}$ and $\delta_{\rm min}$ are the maximal and minimal $\delta$-gradings appearing in the support of $Kh(L)$, respectively.
A link is called \textit{thin} if its width is $1$. A link is \textit{thick} if it is not thin. While the Khovanov homology of a link may depend on the orientation of its components, the Khovanov width of a link is independent of the orientations of the link components. This follows, for example, from \cite[Proposition~28]{Khovanov_homology}. Thus, it makes sense to discuss the width for unoriented links.

Given a strongly invertible knot $(K,\Phi)$, the fixed points of $\Phi$ form an unknot in $S^3$. If one excises an open $\Phi$-equivariant tubular neighborhood of $K$ from $S^3$ and quotients by $\Phi$, then the fixed point locus of $\Phi$ becomes a properly embedded tangle $(T, B^3)$, the \textit{tangle exterior} of $K$. Taking the double branched cover of $B^3$ along $T$ yields the exterior of $K$. One takes a fixed diagram for $T$ (which we also denote by $T$) in $B^3$ such that that the $\infty$-tangle filling $T(\infty)$ of $T$ is unknotted and the $0$-tangle filling $T(0)$ is a 2-component link of determinant $0$ (see Figure~\ref{fig:tangle_conventions}). In general, these conventions ensure that the double branched cover of the link $T(p/q)$ will be diffeomorphic to $p/q$-surgery on $K$ (cf. the Montesinos trick~\cite{Montesinos_trick}).

\begin{figure}[htbp] 
	\centering
	\def\svgwidth{0,7\columnwidth}
\begingroup%
  \makeatletter%
  \providecommand\color[2][]{%
    \errmessage{(Inkscape) Color is used for the text in Inkscape, but the package 'color.sty' is not loaded}%
    \renewcommand\color[2][]{}%
  }%
  \providecommand\transparent[1]{%
    \errmessage{(Inkscape) Transparency is used (non-zero) for the text in Inkscape, but the package 'transparent.sty' is not loaded}%
    \renewcommand\transparent[1]{}%
  }%
  \providecommand\rotatebox[2]{#2}%
  \newcommand*\fsize{\dimexpr\f@size pt\relax}%
  \newcommand*\lineheight[1]{\fontsize{\fsize}{#1\fsize}\selectfont}%
  \ifx\svgwidth\undefined%
    \setlength{\unitlength}{530.89915617bp}%
    \ifx\svgscale\undefined%
      \relax%
    \else%
      \setlength{\unitlength}{\unitlength * \real{\svgscale}}%
    \fi%
  \else%
    \setlength{\unitlength}{\svgwidth}%
  \fi%
  \global\let\svgwidth\undefined%
  \global\let\svgscale\undefined%
  \makeatother%
  \begin{picture}(1,0.38741684)%
    \lineheight{1}%
    \setlength\tabcolsep{0pt}%
    \put(0,0){\includegraphics[width=\unitlength,page=1]{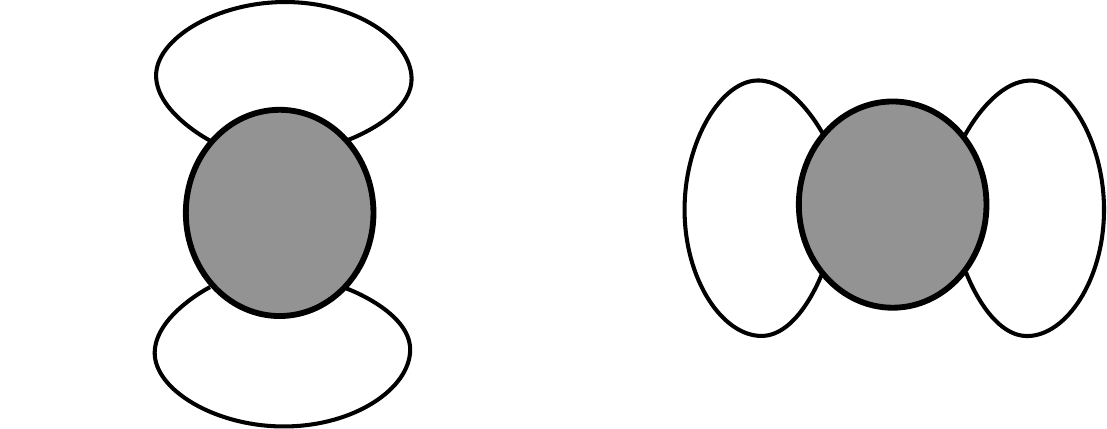}}%
    \put(0.23182785,0.18668545){\color[rgb]{0,0,0}\makebox(0,0)[lt]{\lineheight{1.25}\smash{\begin{tabular}[t]{l}$T$\end{tabular}}}}%
    \put(0.0006421,0.19677417){\color[rgb]{0,0,0}\makebox(0,0)[lt]{\lineheight{1.25}\smash{\begin{tabular}[t]{l}$T(0)=$\end{tabular}}}}%
    \put(0.43464296,0.20210227){\color[rgb]{0,0,0}\makebox(0,0)[lt]{\lineheight{1.25}\smash{\begin{tabular}[t]{l}$T(\infty)=$\end{tabular}}}}%
    \put(0.78609976,0.19439849){\color[rgb]{0,0,0}\makebox(0,0)[lt]{\lineheight{1.25}\smash{\begin{tabular}[t]{l}$T$\end{tabular}}}}%
  \end{picture}%
\endgroup%

	\caption{Left: The tangle filling of $T$ with slope $0$ yields a $2$-component link with determinant $0$ whose double branched cover is $K(0)$. Right: The tangle filling of $T$ with slope $\infty$ yields an unknot whose double branched cover is $S^3=K(\infty)$.}
	\label{fig:tangle_conventions}
\end{figure}

Since $T(n-1)$ can be obtained from $T(n)$ by resolving a single crossing, the long exact sequence in Khovanov homology contains a map
\[
f_n: {Kh}\big(T(n)\big)\rightarrow {Kh}\big(T(n-1)\big).
\]
This mapping preserves the homological grading but decreases the quantum grading by -1. Let $\mathbb{A}$ be the vector space obtained as the inverse limit of the system $({Kh}(T(n)),f_n)_{n\in \Z}$. That is, $\mathbb{A}$ consists of sequences $(x_n)_{n\in \Z}$ such that $x_n\in{Kh}(T(n))$ and $x_{n-1}=f_n(x_n)$ for all $n$. Let $\mathbb{K}$ be the subspace of $\mathbb{A}$ consisting of sequences such that $x_n=0$ for all $n$ sufficiently small. The invariant $\varkappa$ is defined to be the quotient
\[
\varkappa(K,\Phi)=\mathbb{A}/\mathbb{K}.
\]
Since the maps $f_n$ preserve the homological grading in Khovanov homology, this induces an absolute $\Z$-grading $h$ on $\varkappa$. Furthermore, since the $f_n$ preserve relative (but not absolute) $q$-gradings, $\varkappa$ also inherits a relative $q$-grading and hence a relative $\delta$-grading. Thus, it makes sense to speak of the width of $\varkappa$. In this article, we normalize the $q$- and $\delta$-gradings of $\varkappa$ so that they agree with the $q$- and $\delta$-gradings of $Kh(T(0))$.

In practice, the invariant $\varkappa$ is relatively easy to compute. The map $f_n$ sits inside the following exact triangle of homology groups:
\[\begin{tikzpicture}[>=latex] 
\matrix (m) [matrix of math nodes, row sep=1em,column sep=1em]
{ Kh(T(n)) & & Kh(T(n-1)) \\
& Kh(U)& \\ };
\path[->,font=\scriptsize]
(m-1-1) edge node[auto] {$ f_n $} (m-1-3)
(m-2-2.north west) edge (m-1-1.south east)
(m-1-3.south west) edge (m-2-2.north east);
\end{tikzpicture}.\]
Since the Khovanov homology $Kh(U)$ of the unknot $U$ has dimension one, it follows that for each $n$ the map $f_n$ is either (i) injective with cokernel of dimension one or (ii) surjective with kernel of dimension one. This allows the image of any $f_n$ to be easily calculated by comparing $Kh(T(n))$ and $Kh(T(n-1))$.

Moreover, Watson shows that for every strongly invertible knot $(K,\Phi)$ there is an integer $N$ such that $f_n$ is surjective for all $n>N$ and injective for all $n<N$ \cite{Watson2012}. Given this integer $N$, $\varkappa(K,\Phi)$ is then naturally isomorphic to the image of the map
\[
f_{N}\circ f_{N+1}: Kh(T(N+1))\rightarrow Kh(T(N-1)). 
\]
			
\section{Counterexamples to Watson's conjecture} \label{sec:QAKhovanov}
In this section, we prove Theorem~\ref{thm:counterexample_Kappa}. That is, we show that the two knots $K_1$ and $K_2$ provide counterexamples to Conjecture~\ref{con:watson}.
					
We write $K$ for either $K_1$ or $K_2$. From Figure~\ref{fig:example_knots} it is apparent that $K$ is strongly invertible. Furthermore, computing the symmetry group in SnapPy shows that the strong inversion on $K$ is unique. This allows us to drop the strong inversion from the notation. Hence there is a unique tangle exterior $T$ whose double branched cover is the exterior of $K$. These tangle exteriors are depicted in Figure~\ref{fig:tangle_exteriors}. For any slope $r\in \Q$ the manifold $K(r)$ is homeomorphic to the double branched cover of $T(r)$, the filling of $T$ by the rational tangle of slope $r$. 
				
\begin{figure}[htbp]
	\centering
    \includegraphics[width=.9\textwidth]{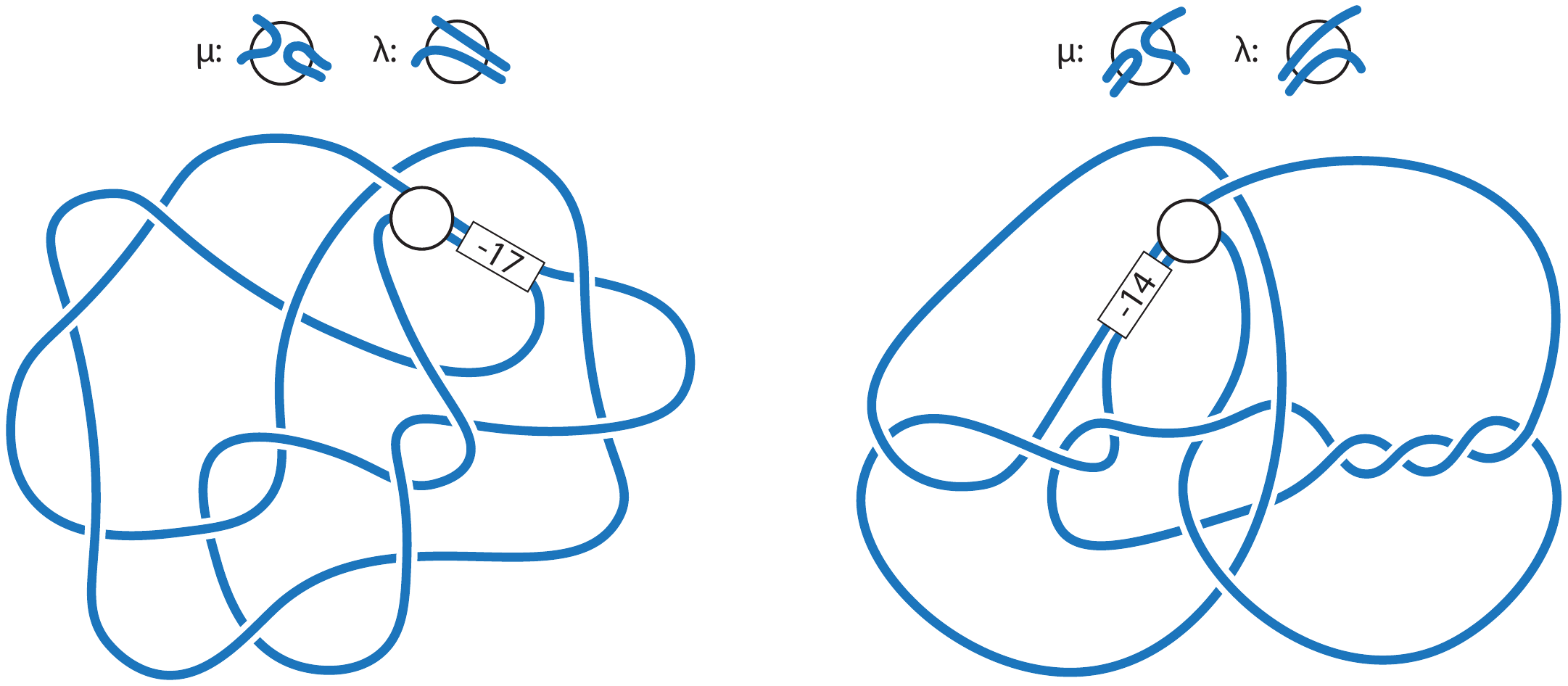}
	\caption{The tangle exteriors $T_1$ of $K_1$ (left) and $T_2$ of $K_2$ (right), framed by the images of the meridians $\mu$ and Seifert longitudes $\lambda$ under the covering map.  Above each are the associated rational tangle fillings.}
	\label{fig:tangle_exteriors}
\end{figure}
								
\begin{prop}\label{prop:str_inv}
    The $\varkappa$-invariant of $K$ is given as
\[
\begin{tikzpicture}[scale=0.85]
\draw [black] (-2.2,0) -- (-2.2,1.5);
\draw [black] (-1,0) -- (-1,1.5);
\draw [gray] (-0.5,0) -- (-0.5,1.5);
\draw [gray] (0,0) -- (0,1.5);
\draw [gray] (0.5,0) -- (0.5,1.5);
\draw [gray] (1,0) -- (1,1.5);
\draw [gray] (1.5,0) -- (1.5,1.5);
\draw [gray] (2,0) -- (2,1.5);
\draw [gray] (2.5,0) -- (2.5,1.5);
\draw [gray] (3,0) -- (3,1.5);
\draw [gray] (3.5,0) -- (3.5,1.5);
\draw [black] (4,0) -- (4,1.5);

\draw [black] (-2.2,1.5) -- (4,1.5);
\draw [black] (-2.2,1) -- (4,1);
\draw [gray] (-2.2,0.5) -- (4,0.5);
\draw [black] (-2.2,0) -- (4,0);

\node at (-3.2,0.75) {$\varkappa(K_1)=$};

\node at (-1.58,0.75) {\footnotesize{$\delta=17$}}; 
\node at (-1.58,0.25) {\footnotesize{$\delta=15$}}; 

\node at (-1.83,1.25) {\footnotesize{$h=$}};

\node at (-0.75,1.25) {\footnotesize{-$9$}}; 
\node at (-0.25,1.25) {\footnotesize{-$8$}}; 
\node at (0.25,1.25) {\footnotesize{-$7$}}; 
\node at (0.75,1.25) {\footnotesize{-$6$}}; 
\node at (1.25,1.25) {\footnotesize{-$5$}}; 
\node at (1.75,1.25) {\footnotesize{-$4$}}; 
\node at (2.25,1.25) {\footnotesize{-$3$}}; 
\node at (2.75,1.25) {\footnotesize{-$2$}}; 
\node at (3.25,1.25) {\footnotesize{-$1$}}; 
\node at (3.75,1.25) {\footnotesize{$0$}}; 

\node at (-0.75,.75) {$1$}; 
\node at (-0.25,.75) {$2$}; 
\node at (0.25,.75) {$3$}; 
\node at (0.75,.75) {$4$}; 
\node at (1.25,.75) {$4$}; 
\node at (1.75,.75) {$4$}; 
\node at (2.25,.75) {$3$}; 
\node at (2.75,.75) {$2$}; 
\node at (3.25,.75) {$1$}; 

\node at (1.25,.25) {$1$}; 
\node at (2.25,.25) {$1$}; 
\node at (2.75,.25) {$1$}; 
\node at (3.75,.25) {$1$}; 
\end{tikzpicture}\]
\[
\begin{tikzpicture}[scale=0.85]
\draw [black] (-2.2,0) -- (-2.2,1.5);
\draw [black] (-1,0) -- (-1,1.5);
\draw [gray] (-0.5,0) -- (-0.5,1.5);
\draw [gray] (0,0) -- (0,1.5);
\draw [gray] (0.5,0) -- (0.5,1.5);
\draw [gray] (1,0) -- (1,1.5);
\draw [gray] (1.5,0) -- (1.5,1.5);
\draw [gray] (2,0) -- (2,1.5);
\draw [gray] (2.5,0) -- (2.5,1.5);
\draw [gray] (3,0) -- (3,1.5);
\draw [gray] (3.5,0) -- (3.5,1.5);
\draw [gray] (4,0) -- (4,1.5);
\draw [black] (4.5,0) -- (4.5,1.5);

\draw [black] (-2.2,1.5) -- (4.5,1.5);
\draw [black] (-2.2,1) -- (4.5,1);
\draw [gray] (-2.2,0.5) -- (4.5,0.5);
\draw [black] (-2.2,0) -- (4.5,0);

\node at (-3.2,0.75) {$\varkappa(K_2)=$};

\node at (-1.58,0.75) {\footnotesize{$\delta=17$}}; 
\node at (-1.58,0.25) {\footnotesize{$\delta=15$}}; 

\node at (-1.83,1.25) {\footnotesize{$h=$}};

\node at (-0.75,1.25) {\footnotesize{-$6$}}; 
\node at (-0.25,1.25) {\footnotesize{-$5$}}; 
\node at (0.25,1.25) {\footnotesize{-$4$}}; 
\node at (0.75,1.25) {\footnotesize{-$3$}}; 
\node at (1.25,1.25) {\footnotesize{-$2$}}; 
\node at (1.75,1.25) {\footnotesize{-$1$}}; 
\node at (2.25,1.25) {\footnotesize{$0$}}; 
\node at (2.75,1.25) {\footnotesize{$1$}}; 
\node at (3.25,1.25) {\footnotesize{$2$}}; 
\node at (3.75,1.25) {\footnotesize{$3$}}; 
\node at (4.25,1.25) {\footnotesize{$4$}}; 

\node at (-0.75,.75) {$1$}; 
\node at (-0.25,.75) {$2$}; 
\node at (0.25,.75) {$3$}; 
\node at (0.75,.75) {$4$}; 
\node at (1.25,.75) {$4$}; 
\node at (1.75,.75) {$4$}; 
\node at (2.25,.75) {$3$}; 
\node at (2.75,.75) {$2$}; 
\node at (3.25,.75) {$1$}; 

\node at (1.25,.25) {$1$};
\node at (1.75,.25) {$1$}; 
\node at (2.25,.25) {$1$}; 
\node at (2.75,.25) {$2$}; 
\node at (3.25,.25) {$1$}; 
\node at (3.75,.25) {$1$}; 
\node at (4.25,.25) {$1$}; 
\end{tikzpicture}\]
Moreover, for any $r\in \Q$, the branching set $T(r)$ has Khovanov width $2$ and is therefore not thin. 
\end{prop}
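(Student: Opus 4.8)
The proposition has two halves --- the explicit $\varkappa$-tables, and the assertion that every rational filling $T_i(r)$ has width exactly $2$ --- and the second half splits into an upper bound $w_{Kh}(T_i(r))\le 2$ and a lower bound $w_{Kh}(T_i(r))\ge 2$. For the tables I would read the tangle diagrams $T_1,T_2$ off Figure~\ref{fig:tangle_exteriors} and compute the reduced $\Z_2$-Khovanov homology of the integer fillings $T_i(n)$ for $n$ in an explicit window, wide enough to exhibit the change of behaviour of the maps $f_n$. From the dimension counts together with the exact triangles involving $Kh(U)$ one locates the threshold $N$ of Section~\ref{sec:Kappa} --- the unique $n$ at which $f_n$ switches from surjective to injective --- and Watson's monotonicity result~\cite{Watson2012} certifies that this is the global threshold. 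Then $\varkappa(K_i)\cong\operatorname{im}(f_N\circ f_{N+1})$ is a finite linear-algebra computation, and its bigraded dimensions are recorded after normalising the $q$-grading so that the $\delta$-gradings agree with those of $Kh(T_i(0))$; since the $0$-filling is the $\lambda$-filling, this is the Seifert-longitude normalisation used in Theorem~\ref{thm:counterexample_Kappa}, and it reproduces both tables.

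For the uniform width statement the cleanest route is the immersed multicurve invariant of the $4$-ended tangle $T_i$: one computes it once (for instance with \emph{kht++}~\cite{khtpp}), after which $Kh(T_i(r))$ is the Lagrangian Floer homology of this curve with a line $\ell_r$ of slope $r$, for \emph{every} $r\in\Q$, and the $\delta$-graded structure is read off geometrically. The claim then reduces to showing that the curve invariant of $T_i$ spans exactly two $\delta$-diagonals and is simple enough that every $\ell_r$ meets it so as to span precisely those two: the upper bound because the curve is "nearly thin", the lower bound because it is genuinely not a single thin curve. For the upper bound only, one can instead argue by hand: decompose the filling rational tangle along its continued fraction, so that $T_i(p/q)$ is built from an integer filling by successive crossing additions, each giving an unoriented skein exact triangle whose third term is a strictly simpler rational filling; the integer fillings are all width $2$ by the computation above and stabilise to width $2$ as $n\to\pm\infty$ (the generator gained at each step lands in one of the two $\delta$-gradings already present), so one inducts on the length of the continued fraction.

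For the lower bound I would first dispatch every slope $r=p/q$ for which $K_i(p/q)$ is not an L-space. If $T_i(r)$ were homologically thin then $\dim Kh(T_i(r))=|\det(T_i(r))|=|H_1(K_i(p/q))|=|p|$ (and determinant $0$ settles $r=0$ directly, $T_i(0)$ being a nonempty link with $Kh\ne 0$); but the Ozsv\'ath--Szab\'o spectral sequence $Kh(\overline L;\Z_2)\Rightarrow\widehat{\HF}(\Sigma_2(L);\Z_2)$ applied to $L=T_i(r)$, together with $\rk_{\Z_2}\widehat{\HF}(Y)\ge|H_1(Y)|$, forces $\rk_{\Z_2}\widehat{\HF}(K_i(p/q))=|p|$ and hence that $K_i(p/q)$ is an L-space, a contradiction. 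This leaves only the L-space slopes, which form a half-line $[\,2g(K_i)-1,\infty)$ (after possibly replacing $K_i$ by its mirror, which changes no widths). Sufficiently large slopes are thick because $\varkappa(K_i)$ has width $2$ and, by the inverse-limit construction, embeds as a $\delta$-graded subquotient of $Kh(T_i(n))$ for all large $n$, so the $\delta$-support there already meets two diagonals; the remaining band of L-space slopes is then handled by the curve invariant (or by propagating the $\delta=15$ and $\delta=17$ generators of $\varkappa$ through the skein triangles of the previous paragraph).

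The main obstacle is exactly this uniformity over \emph{all} rational slopes, in particular ruling out width $1$ at some "middle" L-space slope. A bare-hands skein-triangle induction only yields $w_{Kh}(T_i(p/q))\le 4$ unless one carries out delicate $\delta$-grading bookkeeping along the Farey tree, keeping the two diagonals aligned and preventing same-diagonal cancellation; doing this cleanly --- or, equivalently, honestly identifying the immersed-curve invariant of $T_i$ and pinning down its $\delta$-range --- is where the real work lies.
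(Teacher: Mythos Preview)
Your computation of $\varkappa$ matches the paper's approach exactly: compute $Kh(T_i(n))$ for a window of integers, locate the threshold $N$ from the dimension jumps, and read off $\varkappa$ as the image of $f_N\circ f_{N+1}$.

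For the width statement, however, you are working far too hard. What you call ``the main obstacle'' --- uniformity of the width over all rational slopes --- is precisely what Watson already proved in \cite{Watson2012}. The paper's argument is two lines: the integer computations together with \cite[Lemma~4.10]{Watson2012} show $w_{Kh}(T_i(n))=2$ for every $n\in\Z$, and then \cite[Proposition~5.2]{Watson2012} immediately gives $w_{Kh}(T_i(r))=2$ for every $r\in\Q$. No immersed curves, no Ozsv\'ath--Szab\'o spectral sequence, no case split into L-space and non-L-space slopes, no Farey-tree induction. Your proposed routes (the multicurve invariant, or the continued-fraction skein induction with $\delta$-bookkeeping) would eventually get there, but they re-derive from scratch a structural result that is already available off the shelf; and your lower-bound argument for the ``remaining band of L-space slopes'' is left genuinely vague, whereas Watson's Proposition~5.2 handles both bounds at once. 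The moral is that once the integer fillings are known to have constant width, the extension to all rationals is a black box.
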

				
\begin{proof} 
With the Mathematica KnotTheory package~\cite{knotatlas} and KnotJob~\cite{KnotJob} we compute the reduced Khovanov homology of $T(n)$ for small integers $n$ until we find the integer $N$. By comparing the Khovanov homologies of $T(N-1)$ and $T(N+1)$ we can read-off the $\varkappa$-invariant.

These computations are shown in Figure~\ref{fig:Kappa}. The Khovanov homologies of $T(n)$ are displayed by printing the generators (as $\bullet$ or $\circ$) in the different homological gradings. There are two relative $\delta$-gradings, distinguished here by the conventions that a $\bullet$ denotes a copy of $\Z_2$ in relative $\delta$-grading $17$ while a $\circ$ denotes a copy of $\Z_2$ in relative $\delta$-grading $15$. 

From this, we can read-off that $N=20$ for $K_1$ and $N=16$ for $K_2$. Thus the generators in black (and without an underline) appear in $Kh(T(n))$ for all integers $n$ and therefore contribute to $\varkappa$, while the generators in red (with an underline) do not survive to $\varkappa$. This yields the claimed values for $\varkappa$.
				
The Khovanov width of $K(r)$ can be computed using the results of \cite{Watson2012}. The computations in Figure~\ref{fig:Kappa} combined with \cite[Lemma~4.10]{Watson2012} imply that each link $T(n)$ has Khovanov width ${w}_{Kh}=2$.
By \cite[Proposition~5.2]{Watson2012}, this implies that the Khovanov width of all rational tangle fillings $T(r)$ is $2$ as well. 
\end{proof}

\begin{figure}[tt]
 \caption{The calculation of the $\varkappa$-invariant of $K_1$ and $K_2$.}
\begin{tikzpicture}[>=latex] 

\draw [black] (5.5,0) -- (5.5,8);
\draw [gray] (4.5,0) -- (4.5,8);
\draw [gray] (3.5,0) -- (3.5,8);
\draw [gray] (2.5,0) -- (2.5,8);
\draw [gray] (1.5,0) -- (1.5,8);
\draw [gray] (0.5,0) -- (0.5,8);
\draw [gray] (-0.5,0) -- (-0.5,8);
\draw [gray] (-1.5,0) -- (-1.5,8);
\draw [gray] (-2.5,0) -- (-2.5,8);
\draw [gray] (-3.5,0) -- (-3.5,8);
\draw [black] (-4.5,0) -- (-4.5,8);
\draw [black] (-6,0) -- (-6,8);

\draw [black] (-6,8) -- (5.5,8);
\draw [black] (-6,7) -- (5.5,7);
\draw [gray] (-6,6.5) -- (5.5,6.5);
\draw [gray] (-6,5.5) -- (5.5,5.5);
\draw [gray] (-6,4.5) -- (5.5,4.5);
\draw [gray] (-6,3.5) -- (5.5,3.5);
\draw [gray] (-6,2.5) -- (5.5,2.5);
\draw [gray] (-6,1.5) -- (5.5,1.5);
\draw [black] (-6,1) -- (5.5,1);
\draw [black] (-6,0) -- (5.5,0);

\draw [black] (-6,8) -- (-4.5,7);

\node at (-5.5,7.25) {\footnotesize{$n$}};
\node at (-5,7.75) {\footnotesize{$h$}};
\node at (-4,7.5) {\footnotesize{-$9$}};
\node at (-3,7.5) {\footnotesize{-$8$}};
\node at (-2,7.5) {\footnotesize{-$7$}};
\node at (-1,7.5) {\footnotesize{-$6$}};
\node at (0,7.5) {\footnotesize{-$5$}};
\node at (1,7.5) {\footnotesize{-$4$}};
\node at (2,7.5) {\footnotesize{-$3$}};
\node at (3,7.5) {\footnotesize{-$2$}};
\node at (4,7.5) {\footnotesize{-$1$}};
\node at (5,7.5) {\footnotesize{$0$}};

\node at (-5.25, 6) {\footnotesize{$Kh(T(22))$}};

\node at (-4.2, 6.15) {\color{black}{$\bullet$}};

\node at (-3.2, 6.15) {\color{black}{$\bullet$}};
\node at (-3, 6.15) {\color{black}{$\bullet$}};

\node at (-2.2, 6.15) {\color{black}{$\bullet$}};
\node at (-2, 6.15) {\color{black}{$\bullet$}};
\node at (-1.8, 6.15) {\color{black}{$\bullet$}};

\node at (-1.2, 6.15) {\color{black}{$\bullet$}};
\node at (-1, 6.15) {\color{black}{$\bullet$}};
\node at (-0.8, 6.15) {\color{black}{$\bullet$}};
\node at (-1.2, 5.85) {\color{black}{$\bullet$}};
\node at (-1, 5.85) {\color{red}{$\bullet$}};
  \node at (-1, 5.85) {\color{red}{\underline{\phantom{$\bullet$}}}};

\node at (-0.2, 6.15) {\color{black}{$\bullet$}};
\node at (0, 6.15) {\color{black}{$\bullet$}};
\node at (0.2, 6.15) {\color{black}{$\bullet$}};
\node at (-0.2, 5.85) {\color{black}{$\bullet$}};
\node at (0, 5.85) {\color{black}{$\circ$}};

\node at (0.8, 6.15) {\color{black}{$\bullet$}};
\node at (1, 6.15) {\color{black}{$\bullet$}};
\node at (1.2, 6.15) {\color{black}{$\bullet$}};
\node at (0.8, 5.85) {\color{black}{$\bullet$}};
\node at (1, 5.85) {\color{red}{$\bullet$}};
\node at (1, 5.85) {\color{red}{\underline{\phantom{$\bullet$}}}};

\node at (1.8, 6.15) {\color{black}{$\bullet$}};
\node at (2, 6.15) {\color{black}{$\bullet$}};
\node at (2.2, 6.15) {\color{black}{$\bullet$}};
\node at (1.8, 5.85) {\color{black}{$\circ$}};

\node at (2.8, 6.15) {\color{black}{$\bullet$}};
\node at (3, 6.15) {\color{black}{$\bullet$}};
\node at (3.2, 6.15) {\color{black}{$\circ$}};

\node at (3.8, 6.15) {\color{black}{$\bullet$}};

\node at (4.8, 6.15) {\color{black}{$\circ$}};

\node at (-5.25, 5) {\footnotesize{$Kh(T(21))$}};

\node at (-4.2, 5.15) {\color{black}{$\bullet$}};

\node at (-3.2, 5.15) {\color{black}{$\bullet$}};
\node at (-3, 5.15) {\color{black}{$\bullet$}};

\node at (-2.2, 5.15) {\color{black}{$\bullet$}};
\node at (-2, 5.15) {\color{black}{$\bullet$}};
\node at (-1.8, 5.15) {\color{black}{$\bullet$}};

\node at (-1.2, 5.15) {\color{black}{$\bullet$}};
\node at (-1, 5.15) {\color{black}{$\bullet$}};
\node at (-0.8, 5.15) {\color{black}{$\bullet$}};
\node at (-1.2, 4.85) {\color{black}{$\bullet$}};

\node at (-0.2, 5.15) {\color{black}{$\bullet$}};
\node at (0, 5.15) {\color{black}{$\bullet$}};
\node at (0.2, 5.15) {\color{black}{$\bullet$}};
\node at (-0.2, 4.85) {\color{black}{$\bullet$}};
\node at (0, 4.85) {\color{black}{$\circ$}};

\node at (0.8, 5.15) {\color{black}{$\bullet$}};
\node at (1, 5.15) {\color{black}{$\bullet$}};
\node at (1.2, 5.15) {\color{black}{$\bullet$}};
\node at (0.8, 4.85) {\color{black}{$\bullet$}};
\node at (1, 4.85) {\color{red}{$\bullet$}};
\node at (1, 4.85) {\color{red}{\underline{\phantom{$\bullet$}}}};

\node at (1.8, 5.15) {\color{black}{$\bullet$}};
\node at (2, 5.15) {\color{black}{$\bullet$}};
\node at (2.2, 5.15) {\color{black}{$\bullet$}};
 \node at (1.8, 4.85) {\color{black}{$\circ$}};

\node at (2.8, 5.15) {\color{black}{$\bullet$}};
\node at (3, 5.15) {\color{black}{$\bullet$}};
\node at (3.2, 5.15) {\color{black}{$\circ$}};

\node at (3.8, 5.15) {\color{black}{$\bullet$}};

\node at (4.8, 5.15) {\color{black}{$\circ$}};

\node at (-5.25, 4) {\footnotesize{$Kh(T(20))$}};

\node at (-4.2, 4.15) {\color{black}{$\bullet$}};

\node at (-3.2, 4.15) {\color{black}{$\bullet$}};
\node at (-3, 4.15) {\color{black}{$\bullet$}};

\node at (-2.2, 4.15) {\color{black}{$\bullet$}};
\node at (-2, 4.15) {\color{black}{$\bullet$}};
\node at (-1.8, 4.15) {\color{black}{$\bullet$}};

\node at (-1.2, 4.15) {\color{black}{$\bullet$}};
\node at (-1, 4.15) {\color{black}{$\bullet$}};
\node at (-0.8, 4.15) {\color{black}{$\bullet$}};
\node at (-1.2, 3.85) {\color{black}{$\bullet$}};

\node at (-0.2, 4.15) {\color{black}{$\bullet$}};
\node at (0, 4.15) {\color{black}{$\bullet$}};
\node at (0.2, 4.15) {\color{black}{$\bullet$}};
\node at (-0.2, 3.85) {\color{black}{$\bullet$}};
\node at (0, 3.85) {\color{black}{$\circ$}};

\node at (0.8, 4.15) {\color{black}{$\bullet$}};
\node at (1, 4.15) {\color{black}{$\bullet$}};
\node at (1.2, 4.15) {\color{black}{$\bullet$}};
\node at (0.8, 3.85) {\color{black}{$\bullet$}};
\node at (1, 3.85) {\color{red}{$\bullet$}};
\node at (1, 3.85) {\color{red}{\underline{\phantom{$\bullet$}}}};
\node at (1.2, 3.85) {\color{red}{$\circ$}};
\node at (1.2, 3.85) {\color{red}{\underline{\phantom{$\circ$}}}};

\node at (1.8, 4.15) {\color{black}{$\bullet$}};
\node at (2, 4.15) {\color{black}{$\bullet$}};
\node at (2.2, 4.15) {\color{black}{$\bullet$}};
\node at (1.8, 3.85) {\color{black}{$\circ$}};

\node at (2.8, 4.15) {\color{black}{$\bullet$}};
\node at (3, 4.15) {\color{black}{$\bullet$}};
\node at (3.2, 4.15) {\color{black}{$\circ$}};

\node at (3.8, 4.15) {\color{black}{$\bullet$}};

\node at (4.8, 4.15) {\color{black}{$\circ$}};

\node at (-5.25, 3) {\footnotesize{$Kh(T(19))$}};

\node at (-4.2, 3.15) {\color{black}{$\bullet$}};

\node at (-3.2, 3.15) {\color{black}{$\bullet$}};
\node at (-3, 3.15) {\color{black}{$\bullet$}};

\node at (-2.2, 3.15) {\color{black}{$\bullet$}};
\node at (-2, 3.15) {\color{black}{$\bullet$}};
\node at (-1.8, 3.15) {\color{black}{$\bullet$}};

\node at (-1.2, 3.15) {\color{black}{$\bullet$}};
\node at (-1, 3.15) {\color{black}{$\bullet$}};
\node at (-0.8, 3.15) {\color{black}{$\bullet$}};
\node at (-1.2, 2.85) {\color{black}{$\bullet$}};

\node at (-0.2, 3.15) {\color{black}{$\bullet$}};
\node at (0, 3.15) {\color{black}{$\bullet$}};
\node at (0.2, 3.15) {\color{black}{$\bullet$}};
\node at (-0.2, 2.85) {\color{black}{$\bullet$}};
\node at (0, 2.85) {\color{black}{$\circ$}};

\node at (0.8, 3.15) {\color{black}{$\bullet$}};
\node at (1, 3.15) {\color{black}{$\bullet$}};
\node at (1.2, 3.15) {\color{black}{$\bullet$}};
\node at (0.8, 2.85) {\color{black}{$\bullet$}};
 \node at (1, 2.85) {\color{red}{$\circ$}};
\node at (1, 2.85) {\color{red}{\underline{\phantom{$\circ$}}}};

\node at (1.8, 3.15) {\color{black}{$\bullet$}};
\node at (2, 3.15) {\color{black}{$\bullet$}};
\node at (2.2, 3.15) {\color{black}{$\bullet$}};
\node at (1.8, 2.85) {\color{black}{$\circ$}};

\node at (2.8, 3.15) {\color{black}{$\bullet$}};
\node at (3, 3.15) {\color{black}{$\bullet$}};
\node at (3.2, 3.15) {\color{black}{$\circ$}};

\node at (3.8, 3.15) {\color{black}{$\bullet$}};

\node at (4.8, 3.15) {\color{black}{$\circ$}};

\node at (-5.25, 2) {\footnotesize{$Kh(T(18))$}};

\node at (-4.2, 2.15) {\color{black}{$\bullet$}};

\node at (-3.2, 2.15) {\color{black}{$\bullet$}};
\node at (-3, 2.15) {\color{black}{$\bullet$}};

\node at (-2.2, 2.15) {\color{black}{$\bullet$}};
\node at (-2, 2.15) {\color{black}{$\bullet$}};
\node at (-1.8, 2.15) {\color{black}{$\bullet$}};

\node at (-1.2, 2.15) {\color{black}{$\bullet$}};
\node at (-1, 2.15) {\color{black}{$\bullet$}};
\node at (-0.8, 2.15) {\color{black}{$\bullet$}};
\node at (-1.2, 1.85) {\color{black}{$\bullet$}};

\node at (-0.2, 2.15) {\color{black}{$\bullet$}};
\node at (0, 2.15) {\color{black}{$\bullet$}};
\node at (0.2, 2.15) {\color{black}{$\bullet$}};
\node at (-0.2, 1.85) {\color{black}{$\bullet$}};
\node at (0, 1.85) {\color{black}{$\circ$}};

\node at (0.8, 2.15) {\color{black}{$\bullet$}};
\node at (1, 2.15) {\color{black}{$\bullet$}};
\node at (1.2, 2.15) {\color{black}{$\bullet$}};
\node at (0.8, 1.85) {\color{black}{$\bullet$}};
\node at (1, 1.85) {\color{red}{$\circ$}};
\node at (1, 1.85) {\color{red}{\underline{\phantom{$\circ$}}}};

\node at (1.8, 2.15) {\color{black}{$\bullet$}};
\node at (2, 2.15) {\color{black}{$\bullet$}};
\node at (2.2, 2.15) {\color{black}{$\bullet$}};
\node at (1.8, 1.85) {\color{black}{$\circ$}};

\node at (2.8, 2.15) {\color{black}{$\bullet$}};
\node at (3, 2.15) {\color{black}{$\bullet$}};
\node at (3.2, 2.15) {\color{black}{$\circ$}};
\node at (2.8, 1.85) {\color{red}{$\circ$}};
\node at (2.8, 1.85) {\color{red}{\underline{\phantom{$\circ$}}}};

\node at (3.8, 2.15) {\color{black}{$\bullet$}};

\node at (4.8, 2.15) {\color{black}{$\circ$}};

\node at (-5.25, 0.5) {{$\varkappa(K_1)$}};

\node at (-4.2, 0.65) {\color{black}{$\bullet$}};

\node at (-3.2, 0.65) {\color{black}{$\bullet$}};
\node at (-3, 0.65) {\color{black}{$\bullet$}};

\node at (-2.2, 0.65) {\color{black}{$\bullet$}};
\node at (-2, 0.65) {\color{black}{$\bullet$}};
\node at (-1.8, 0.65) {\color{black}{$\bullet$}};

\node at (-1.2, 0.65) {\color{black}{$\bullet$}};
\node at (-1, 0.65) {\color{black}{$\bullet$}};
\node at (-0.8, 0.65) {\color{black}{$\bullet$}};
\node at (-1.2, 0.35) {\color{black}{$\bullet$}};

\node at (-0.2, 0.65) {\color{black}{$\bullet$}};
\node at (0, 0.65) {\color{black}{$\bullet$}};
\node at (0.2, 0.65) {\color{black}{$\bullet$}};
\node at (-0.2,0.35) {\color{black}{$\bullet$}};
\node at (0, 0.35) {\color{black}{$\circ$}};

\node at (0.8, 0.65) {\color{black}{$\bullet$}};
\node at (1, 0.65) {\color{black}{$\bullet$}};
\node at (1.2, 0.655) {\color{black}{$\bullet$}};
\node at (0.8, 0.35) {\color{black}{$\bullet$}};

\node at (1.8, 0.65) {\color{black}{$\bullet$}};
\node at (2, 0.65) {\color{black}{$\bullet$}};
\node at (2.2, 0.65) {\color{black}{$\bullet$}};
\node at (1.8, 0.35) {\color{black}{$\circ$}};

\node at (2.8, 0.65) {\color{black}{$\bullet$}};
\node at (3, 0.65) {\color{black}{$\bullet$}};
\node at (3.2, 0.65) {\color{black}{$\circ$}};

\node at (3.8, 0.65) {\color{black}{$\bullet$}};

\node at (4.8, 0.65) {\color{black}{$\circ$}};

\draw[thin,->>] (-5.15,6.75) -- (-5.15,6.25);
\draw[thin,->>] (-5.15,5.75) -- (-5.15,5.25);
\draw[thin,left hook->] (-5.15,4.75) -- (-5.15,4.25);
\draw[thin,->>] (-5.15,3.75) -- (-5.15,3.25);
\draw[thin,left hook->] (-5.15,2.75) -- (-5.15,2.25);
\draw[thin,left hook->] (-5.15,1.75) -- (-5.15,1.25);
\end{tikzpicture}

\hfill

\begin{tikzpicture}[>=latex] 

\draw [black] (6.5,0) -- (6.5,8);
\draw [gray] (5.5,0) -- (5.5,8);
\draw [gray] (4.5,0) -- (4.5,8);
\draw [gray] (3.5,0) -- (3.5,8);
\draw [gray] (2.5,0) -- (2.5,8);
\draw [gray] (1.5,0) -- (1.5,8);
\draw [gray] (0.5,0) -- (0.5,8);
\draw [gray] (-0.5,0) -- (-0.5,8);
\draw [gray] (-1.5,0) -- (-1.5,8);
\draw [gray] (-2.5,0) -- (-2.5,8);
\draw [gray] (-3.5,0) -- (-3.5,8);
\draw [black] (-4.5,0) -- (-4.5,8);
\draw [black] (-6,0) -- (-6,8);

\draw [black] (-6,8) -- (6.5,8);
\draw [black] (-6,7) -- (6.5,7);
\draw [gray] (-6,6.5) -- (6.5,6.5);
\draw [gray] (-6,5.5) -- (6.5,5.5);
\draw [gray] (-6,4.5) -- (6.5,4.5);
\draw [gray] (-6,3.5) -- (6.5,3.5);
\draw [gray] (-6,2.5) -- (6.5,2.5);
\draw [gray] (-6,1.5) -- (6.5,1.5);
\draw [black] (-6,1) -- (6.5,1);
\draw [black] (-6,0) -- (6.5,0);

\draw [black] (-6,8) -- (-4.5,7);

\node at (-5.5,7.25) {\footnotesize{$n$}};
\node at (-5,7.75) {\footnotesize{$h$}};
\node at (-4,7.5) {\footnotesize{-$6$}};
\node at (-3,7.5) {\footnotesize{-$5$}};
\node at (-2,7.5) {\footnotesize{-$4$}};
\node at (-1,7.5) {\footnotesize{-$3$}};
\node at (0,7.5) {\footnotesize{-$2$}};
\node at (1,7.5) {\footnotesize{-$1$}};
\node at (2,7.5) {\footnotesize{$0$}};
\node at (3,7.5) {\footnotesize{$1$}};
\node at (4,7.5) {\footnotesize{$2$}};
\node at (5,7.5) {\footnotesize{$3$}};
\node at (6,7.5) {\footnotesize{$4$}};

\node at (-5.25, 6) {\footnotesize{$Kh(T(18))$}};

\node at (-4.2, 6.15) {\color{black}{$\bullet$}};

\node at (-3.2, 6.15) {\color{black}{$\bullet$}};
\node at (-3, 6.15) {\color{black}{$\bullet$}};

\node at (-2.2, 6.15) {\color{black}{$\bullet$}};
\node at (-2, 6.15) {\color{black}{$\bullet$}};
\node at (-1.8, 6.15) {\color{black}{$\bullet$}};

\node at (-1.2, 6.15) {\color{black}{$\bullet$}};
\node at (-1, 6.15) {\color{black}{$\bullet$}};
\node at (-0.8, 6.15) {\color{black}{$\bullet$}};
\node at (-1.2, 5.85) {\color{black}{$\bullet$}};

\node at (-0.2, 6.15) {\color{black}{$\bullet$}};
\node at (0, 6.15) {\color{black}{$\bullet$}};
\node at (0.2, 6.15) {\color{black}{$\bullet$}};
\node at (-0.2, 5.85) {\color{black}{$\bullet$}};
\node at (0, 5.85) {\color{black}{$\circ$}};
\node at (0.2, 5.85) {\color{red}{$\bullet$}};
\node at (0.2, 5.85) {\color{red}{\underline{\phantom{$\bullet$}}}};

\node at (0.8, 6.15) {\color{black}{$\bullet$}};
\node at (1, 6.15) {\color{black}{$\bullet$}};
\node at (1.2, 6.15) {\color{black}{$\bullet$}};
\node at (0.8, 5.85) {\color{black}{$\bullet$}};
\node at (1, 5.85) {\color{black}{$\circ$}};

\node at (1.8, 6.15) {\color{black}{$\bullet$}};
\node at (2, 6.15) {\color{black}{$\bullet$}};
\node at (2.2, 6.15) {\color{black}{$\bullet$}};
\node at (1.8, 5.85) {\color{black}{$\circ$}};
\node at (2, 5.85) {\color{red}{$\bullet$}};
\node at (2, 5.85) {\color{red}{\underline{\phantom{$\bullet$}}}};

\node at (2.8, 6.15) {\color{black}{$\bullet$}};
\node at (3, 6.15) {\color{black}{$\bullet$}};
\node at (3.2, 6.15) {\color{black}{$\circ$}};
\node at (2.8, 5.85) {\color{black}{$\circ$}};

\node at (3.8, 6.15) {\color{black}{$\bullet$}};
\node at (4, 6.15) {\color{black}{$\circ$}};

\node at (4.8, 6.15) {\color{black}{$\circ$}};

\node at (5.8, 6.15) {\color{black}{$\circ$}};

\node at (-5.25, 5) {\footnotesize{$Kh(T(17))$}};

\node at (-4.2, 5.15) {\color{black}{$\bullet$}};

\node at (-3.2, 5.15) {\color{black}{$\bullet$}};
\node at (-3, 5.15) {\color{black}{$\bullet$}};

\node at (-2.2, 5.15) {\color{black}{$\bullet$}};
\node at (-2, 5.15) {\color{black}{$\bullet$}};
\node at (-1.8, 5.15) {\color{black}{$\bullet$}};

\node at (-1.2, 5.15) {\color{black}{$\bullet$}};
\node at (-1, 5.15) {\color{black}{$\bullet$}};
\node at (-0.8, 5.15) {\color{black}{$\bullet$}};
\node at (-1.2, 4.85) {\color{black}{$\bullet$}};

\node at (-0.2, 5.15) {\color{black}{$\bullet$}};
\node at (0, 5.15) {\color{black}{$\bullet$}};
\node at (0.2, 5.15) {\color{black}{$\bullet$}};
\node at (-0.2, 4.85) {\color{black}{$\bullet$}};
\node at (0, 4.85) {\color{black}{$\circ$}};

\node at (0.8, 5.15) {\color{black}{$\bullet$}};
\node at (1, 5.15) {\color{black}{$\bullet$}};
\node at (1.2, 5.15) {\color{black}{$\bullet$}};
\node at (0.8, 4.85) {\color{black}{$\bullet$}};
\node at (1, 4.85) {\color{black}{$\circ$}};

\node at (1.8, 5.15) {\color{black}{$\bullet$}};
\node at (2, 5.15) {\color{black}{$\bullet$}};
\node at (2.2, 5.15) {\color{black}{$\bullet$}};
\node at (1.8, 4.85) {\color{black}{$\circ$}};
\node at (2, 4.85) {\color{red}{$\bullet$}};
\node at (2, 4.85) {\color{red}{\underline{\phantom{$\bullet$}}}};

\node at (2.8, 5.15) {\color{black}{$\bullet$}};
\node at (3, 5.15) {\color{black}{$\bullet$}};
\node at (3.2, 5.15) {\color{black}{$\circ$}};
\node at (2.8, 4.85) {\color{black}{$\circ$}};

\node at (3.8, 5.15) {\color{black}{$\bullet$}};
\node at (4, 5.15) {\color{black}{$\circ$}};

\node at (4.8, 5.15) {\color{black}{$\circ$}};

\node at (5.8, 5.15) {\color{black}{$\circ$}};

\node at (-5.25, 4) {\footnotesize{$Kh(T(16))$}};

\node at (-4.2, 4.15) {\color{black}{$\bullet$}};

\node at (-3.2, 4.15) {\color{black}{$\bullet$}};
\node at (-3, 4.15) {\color{black}{$\bullet$}};

\node at (-2.2, 4.15) {\color{black}{$\bullet$}};
\node at (-2, 4.15) {\color{black}{$\bullet$}};
\node at (-1.8, 4.15) {\color{black}{$\bullet$}};

\node at (-1.2, 4.15) {\color{black}{$\bullet$}};
\node at (-1, 4.15) {\color{black}{$\bullet$}};
\node at (-0.8, 4.15) {\color{black}{$\bullet$}};
\node at (-1.2, 3.85) {\color{black}{$\bullet$}};

\node at (-0.2, 4.15) {\color{black}{$\bullet$}};
\node at (0, 4.15) {\color{black}{$\bullet$}};
\node at (0.2, 4.15) {\color{black}{$\bullet$}};
\node at (-0.2, 3.85) {\color{black}{$\bullet$}};
\node at (0, 3.85) {\color{black}{$\circ$}};

\node at (0.8, 4.15) {\color{black}{$\bullet$}};
\node at (1, 4.15) {\color{black}{$\bullet$}};
\node at (1.2, 4.15) {\color{black}{$\bullet$}};
\node at (0.8, 3.85) {\color{black}{$\bullet$}};
\node at (1, 3.85) {\color{black}{$\circ$}};

\node at (1.8, 4.15) {\color{black}{$\bullet$}};
\node at (2, 4.15) {\color{black}{$\bullet$}};
\node at (2.2, 4.15) {\color{black}{$\bullet$}};
\node at (1.8, 3.85) {\color{black}{$\circ$}};
\node at (2, 3.85) {\color{red}{$\bullet$}};
\node at (2, 3.85) {\color{red}{\underline{\phantom{$\bullet$}}}};
\node at (2.2, 3.85) {\color{red}{$\circ$}};
\node at (2.2, 3.85) {\color{red}{\underline{\phantom{$\circ$}}}};

\node at (2.8, 4.15) {\color{black}{$\bullet$}};
\node at (3, 4.15) {\color{black}{$\bullet$}};
\node at (3.2, 4.15) {\color{black}{$\circ$}};
\node at (2.8, 3.85) {\color{black}{$\circ$}};

\node at (3.8, 4.15) {\color{black}{$\bullet$}};
\node at (4, 4.15) {\color{black}{$\circ$}};

\node at (4.8, 4.15) {\color{black}{$\circ$}};

\node at (5.8, 4.15) {\color{black}{$\circ$}};

\node at (-5.25, 3) {\footnotesize{$Kh(T(15))$}};

\node at (-4.2, 3.15) {\color{black}{$\bullet$}};

\node at (-3.2, 3.15) {\color{black}{$\bullet$}};
\node at (-3, 3.15) {\color{black}{$\bullet$}};

\node at (-2.2, 3.15) {\color{black}{$\bullet$}};
\node at (-2, 3.15) {\color{black}{$\bullet$}};
\node at (-1.8, 3.15) {\color{black}{$\bullet$}};

\node at (-1.2, 3.15) {\color{black}{$\bullet$}};
\node at (-1, 3.15) {\color{black}{$\bullet$}};
\node at (-0.8, 3.15) {\color{black}{$\bullet$}};
\node at (-1.2, 2.85) {\color{black}{$\bullet$}};

\node at (-0.2, 3.15) {\color{black}{$\bullet$}};
\node at (0, 3.15) {\color{black}{$\bullet$}};
\node at (0.2, 3.15) {\color{black}{$\bullet$}};
\node at (-0.2, 2.85) {\color{black}{$\bullet$}};
\node at (0, 2.85) {\color{black}{$\circ$}};

\node at (0.8, 3.15) {\color{black}{$\bullet$}};
\node at (1, 3.15) {\color{black}{$\bullet$}};
\node at (1.2, 3.15) {\color{black}{$\bullet$}};
\node at (0.8, 2.85) {\color{black}{$\bullet$}};
\node at (1, 2.85) {\color{black}{$\circ$}};

\node at (1.8, 3.15) {\color{black}{$\bullet$}};
\node at (2, 3.15) {\color{black}{$\bullet$}};
\node at (2.2, 3.15) {\color{black}{$\bullet$}};
\node at (1.8, 2.85) {\color{black}{$\circ$}};
\node at (2, 2.85) {\color{red}{$\circ$}};
\node at (2, 2.85) {\color{red}{\underline{\phantom{$\circ$}}}};

\node at (2.8, 3.15) {\color{black}{$\bullet$}};
\node at (3, 3.15) {\color{black}{$\bullet$}};
\node at (3.2, 3.15) {\color{black}{$\circ$}};
\node at (2.8, 2.85) {\color{black}{$\circ$}};

\node at (3.8, 3.15) {\color{black}{$\bullet$}};
\node at (4, 3.15) {\color{black}{$\circ$}};

\node at (4.8, 3.15) {\color{black}{$\circ$}};

\node at (5.8, 3.15) {\color{black}{$\circ$}};

\node at (-5.25, 2) {\footnotesize{$Kh(T(14))$}};

\node at (-4.2, 2.15) {\color{black}{$\bullet$}};

\node at (-3.2, 2.15) {\color{black}{$\bullet$}};
\node at (-3, 2.15) {\color{black}{$\bullet$}};

\node at (-2.2, 2.15) {\color{black}{$\bullet$}};
\node at (-2, 2.15) {\color{black}{$\bullet$}};
\node at (-1.8, 2.15) {\color{black}{$\bullet$}};

\node at (-1.2, 2.15) {\color{black}{$\bullet$}};
\node at (-1, 2.15) {\color{black}{$\bullet$}};
\node at (-0.8, 2.15) {\color{black}{$\bullet$}};
\node at (-1.2, 1.85) {\color{black}{$\bullet$}};

\node at (-0.2, 2.15) {\color{black}{$\bullet$}};
\node at (0, 2.15) {\color{black}{$\bullet$}};
\node at (0.2, 2.15) {\color{black}{$\bullet$}};
\node at (-0.2, 1.85) {\color{black}{$\bullet$}};
\node at (0, 1.85) {\color{black}{$\circ$}};

\node at (0.8, 2.15) {\color{black}{$\bullet$}};
\node at (1, 2.15) {\color{black}{$\bullet$}};
\node at (1.2, 2.15) {\color{black}{$\bullet$}};
\node at (0.8, 1.85) {\color{black}{$\bullet$}};
\node at (1, 1.85) {\color{black}{$\circ$}};

\node at (1.8, 2.15) {\color{black}{$\bullet$}};
\node at (2, 2.15) {\color{black}{$\bullet$}};
\node at (2.2, 2.15) {\color{black}{$\bullet$}};
\node at (1.8, 1.85) {\color{black}{$\circ$}};
\node at (2, 1.85) {\color{red}{$\circ$}};
\node at (2, 1.85) {\color{red}{\underline{\phantom{$\circ$}}}};

\node at (2.8, 2.15) {\color{black}{$\bullet$}};
\node at (3, 2.15) {\color{black}{$\bullet$}};
\node at (3.2, 2.15) {\color{black}{$\circ$}};
\node at (2.8, 1.85) {\color{black}{$\circ$}};

\node at (3.8, 2.15) {\color{black}{$\bullet$}};
\node at (4, 2.15) {\color{black}{$\circ$}};
\node at (4.2, 2.15) {\color{red}{$\circ$}};
\node at (4.2, 2.15) {\color{red}{\underline{\phantom{$\circ$}}}};

\node at (4.8, 2.15) {\color{black}{$\circ$}};

\node at (5.8, 2.15) {\color{black}{$\circ$}};

\node at (-5.25, 0.5) {{$\varkappa(K_2)$}};

\node at (-4.2, 0.65) {\color{black}{$\bullet$}};

\node at (-3.2, 0.65) {\color{black}{$\bullet$}};
\node at (-3, 0.65) {\color{black}{$\bullet$}};

\node at (-2.2, 0.65) {\color{black}{$\bullet$}};
\node at (-2, 0.65) {\color{black}{$\bullet$}};
\node at (-1.8, 0.65) {\color{black}{$\bullet$}};

\node at (-1.2, 0.65) {\color{black}{$\bullet$}};
\node at (-1, 0.65) {\color{black}{$\bullet$}};
\node at (-0.8, 0.65) {\color{black}{$\bullet$}};
\node at (-1.2, 0.35) {\color{black}{$\bullet$}};

\node at (-0.2, 0.65) {\color{black}{$\bullet$}};
\node at (0, 0.65) {\color{black}{$\bullet$}};
\node at (0.2, 0.65) {\color{black}{$\bullet$}};
\node at (-0.2, 0.35) {\color{black}{$\bullet$}};
\node at (0, 0.35) {\color{black}{$\circ$}};

\node at (0.8, 0.65) {\color{black}{$\bullet$}};
\node at (1, 0.65) {\color{black}{$\bullet$}};
\node at (1.2, 0.65) {\color{black}{$\bullet$}};
\node at (0.8, 0.35) {\color{black}{$\bullet$}};
\node at (1, 0.35) {\color{black}{$\circ$}};

\node at (1.8, 0.65) {\color{black}{$\bullet$}};
\node at (2, 0.65) {\color{black}{$\bullet$}};
\node at (2.2, 0.65) {\color{black}{$\bullet$}};
\node at (1.8, 0.35) {\color{black}{$\circ$}};

\node at (2.8, 0.65) {\color{black}{$\bullet$}};
\node at (3, 0.65) {\color{black}{$\bullet$}};
\node at (3.2, 0.65) {\color{black}{$\circ$}};
\node at (2.8, 0.35) {\color{black}{$\circ$}};

\node at (3.8, 0.65) {\color{black}{$\bullet$}};
\node at (4, 0.65) {\color{black}{$\circ$}};

\node at (4.8, 0.65) {\color{black}{$\circ$}};

\node at (5.8, 0.65) {\color{black}{$\circ$}};

\draw[thin,->>] (-5.15,6.75) -- (-5.15,6.25);
\draw[thin,->>] (-5.15,5.75) -- (-5.15,5.25);
\draw[thin,left hook->] (-5.15,4.75) -- (-5.15,4.25);
\draw[thin,->>] (-5.15,3.75) -- (-5.15,3.25);
\draw[thin,left hook->] (-5.15,2.75) -- (-5.15,2.25);
\draw[thin,left hook->] (-5.15,1.75) -- (-5.15,1.25);
\end{tikzpicture}

\label{fig:Kappa}
\end{figure}

\section{L-space knots without thin surgeries} 

We finish this article by showing furthermore that the knots $K_1$ and $K_2$ have no thin surgeries at all, thereby proving Theorem~\ref{thm:counterexample_thin}. We again write $K$ for either $K_1$ or $K_2$. Since $K$ is hyperbolic, it follows from Thurston's hyperbolic Dehn surgery theorem~\cite{Th79}, that whenever the length $L(r)$ of a slope $r$ is sufficiently large, $K(r)$ is hyperbolic and its symmetry group injects into the symmetry group of $K$ (see, for example, \cite[Corollary 2.5]{BKM_QA}).\footnote{The \textit{symmetry group} of a $3$-manifold $M$ is defined to be its mapping class group. If $M$ is hyperbolic then the symmetry group of $M$ is isomorphic to its isometry group~\cite{Gabai_DiffMCG,Gabai_DiffMCG2}.} Since the symmetry group of $K$ is $\Z_2$ and contains a unique strong inversion, for all sufficiently large $r$ the manifold $K(r)$ admits a unique description as a double branched cover over $S^3$, namely the one with branching set $T(r)$ discussed in the previous section. Since the $T(r)$ are never thin by Proposition~\ref{prop:str_inv}, it follows that $r$ can be a thin slope for $K$ only if $r$ is an exceptional slope (i.e $K(r)$ is not a hyperbolic manifold) or $r$ is a \emph{symmetry-exceptional} slope for $K$ (i.e $K(r)$ is hyperbolic with a symmetry group larger than that of $K$).

\subsection{Exceptional slopes}
First, we will show that the exceptional slopes are not thin slopes. We need two preliminary results allowing us to deal with toroidal and Seifert fibered exceptional surgeries. 

\begin{lem}\label{lem:sfs_branching}
Let $M$ be small Seifert fibered L-space with infinite fundamental group. Then $M$ has a unique description as the double branched cover of a link in $S^3$. 
\end{lem}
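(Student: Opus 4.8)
The plan is to analyze all orientation-preserving involutions $\tau$ of $M$ with $M/\tau \cong S^3$ directly, reducing to fiber-preserving ones and then classifying those. First I would record the coarse structure of $M$: since an L-space is a rational homology sphere, $M$ is Seifert fibered over $S^2$ with exactly three exceptional fibers $a_1,a_2,a_3\ge 2$ (with one or two exceptional fibers $M$ would be a lens space, with finite $\pi_1$), the nonzero Euler number forces the base orbifold $\mathcal{O}=S^2(a_1,a_2,a_3)$ to satisfy $\sum 1/a_i\le 1$ (so $\mathcal{O}$ is Euclidean or hyperbolic), and consequently the Seifert fibration of $M$ is unique up to isotopy, since the only orientable Seifert fibered spaces with a non-unique fibration are lens spaces, prism manifolds, and certain manifolds with $b_1>0$, none of which is a rational homology sphere with infinite $\pi_1$. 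Now take any involution $\tau$ with $M/\tau\cong S^3$: the branch set $\mathrm{Fix}(\tau)$ is a link, not a surface, precisely because $\tau$ is orientation-preserving, which holds because $S^3$ is orientable. By Meeks--Scott's theorem on finite group actions on Seifert fibered spaces (and the geometrization of $3$-orbifolds), I may assume $\tau$ is a fiber-preserving isometry of the geometric structure on $M$; it descends to an involution $\bar\tau$ of $\mathcal{O}$.

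The second step is to identify $\bar\tau$. Tracking orientations (using that $\tau$ is orientation-preserving on $M$): if $\bar\tau$ preserves the orientation of the underlying $S^2$ --- including $\bar\tau=\mathrm{id}$ --- then a short case check on whether $\tau$ preserves or reverses fiber orientations and on its behaviour over the (at most two) fixed points of $\bar\tau$ shows that $\tau$ is free or $M/\tau$ is Seifert fibered over a closed $2$-orbifold with three genuine cone points, neither compatible with $M/\tau\cong S^3$ (for the latter, recall that a Seifert fibration of $S^3$ over a closed orbifold $\mathcal{B}$ forces $\pi_1^{\mathrm{orb}}(\mathcal{B})=1$, which excludes three cone points). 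A fixed-point-free orientation-reversing $\bar\tau$ likewise makes $\tau$ free. Hence $\bar\tau$ is a reflection of $S^2$ with fixed circle $C$; then $\tau$ reverses fiber orientations, $\mathrm{Fix}(\tau)$ is a link lying over $C$, and the quotient $\mathcal{B}:=\mathcal{O}/\bar\tau$ is a disk with mirror boundary carrying a corner reflector for each cone point on $C$ and an interior cone point for each $\bar\tau$-swapped pair. The crucial --- and I expect hardest --- point is that $C$ passes through \emph{all} the cone points of $\mathcal{O}$: a $\bar\tau$-swapped pair of cone points of order $d$ would give $S^3=M/\tau$ a genuine exceptional fiber of order $d\ge 2$ disjoint from the branch set, which is ruled out by the classification of Seifert fibrations of $S^3$ (equivalently, it would contradict the identification of $\mathrm{Fix}(\tau)$ as a Montesinos link with double branched cover $M$). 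Thus $\mathcal{B}=D^2$ with mirror boundary and exactly three corner reflectors of orders $a_1,a_2,a_3$, and $\mathrm{Fix}(\tau)$ is a Montesinos link.

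It then remains to see that any two such involutions are conjugate in $\mathrm{Diff}(M)$, hence have isotopic branch links. The reflection $\bar\tau$ of $\mathcal{O}=S^2(a_1,a_2,a_3)$ whose fixed circle contains all three cone points is unique up to conjugacy in the finite group $\mathrm{MCG}(\mathcal{O})\cong\mathrm{Isom}(\mathcal{O})$: when the $a_i$ are distinct it is the only nontrivial isometry; when some coincide, the remaining reflections move a cone point off their axis and were already excluded, while the admissible reflections are permuted transitively by the rotations of $\mathcal{O}$. Given two isometric lifts $\tau,\tau'$ of this $\bar\tau$, the isometry $g:=\tau\tau'$ covers $\mathrm{id}_{\mathcal{O}}$ and preserves fiber orientations, so it is a global fiber rotation $r_\theta$; since $\tau$ reverses fiber orientations, $\tau'=\tau r_\theta$ is again an involution and the fiber rotation $r_{-\theta/2}$ conjugates $\tau$ to $\tau'$. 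Therefore all orientation-preserving involutions of $M$ with quotient $S^3$ are mutually conjugate, and $M$ has at most one description as the double branched cover of a link in $S^3$; that it has at least one follows from the Montesinos construction applied to the Seifert invariants of $M$. Alternatively, once $\mathrm{Fix}(\tau)$ is recognized as a Montesinos link, one can finish by quoting the classification of Montesinos links by their double branched covers, which yields uniqueness exactly in the range where $\mathcal{O}$ is Euclidean or hyperbolic.
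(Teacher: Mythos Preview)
Your geometric approach via Meeks--Scott is natural, but there is a genuine gap in the case analysis for $\bar\tau$. You assert that when $\bar\tau$ is orientation-preserving on the base (in particular when $\bar\tau=\mathrm{id}$) and $\tau$ preserves fiber orientations, then $\tau$ must be free. This is false on exceptional fibers: if some $a_i=2$, then the element $-1$ of the circle acting on $M$ is an involution covering $\bar\tau=\mathrm{id}$, it rotates regular fibers freely by $\pi$, but it \emph{fixes pointwise} the multiplicity-$2$ exceptional fiber (since $-1$ lies in its isotropy). The quotient $M/\tau$ is then Seifert fibered over $S^2(a_2,a_3)$ --- only two cone points, not three --- and this can very well be $S^3$, with branch set the image of the multiplicity-$2$ fiber, namely the torus knot $T(a_2,a_3)$. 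Your ``three genuine cone points'' conclusion therefore does not follow, and the later ``crucial step'' about swapped cone points inherits the same difficulty.

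Concretely, $M=\Sigma(2,3,7)$ is a small Seifert fibered rational homology sphere with infinite $\pi_1$, and it is the double branched cover of \emph{both} the torus knot $T(3,7)$ (via the $-1$ involution just described) and the Montesinos knot $P(-2,3,7)$ (via the reflection through all three cone points). These are distinct knots, so uniqueness genuinely fails here. Since your argument never invokes the L-space hypothesis beyond ``$M$ is a rational homology sphere,'' it cannot possibly succeed: the L-space condition is exactly what excludes branch sets like $T(3,7)$ with Seifert fibered complement. The paper's proof is quite different in character --- it quotes \cite{Mo17} to reduce to the dichotomy ``Montesinos link or link with Seifert fibered complement,'' then uses the main theorem of \cite{ADE_link} (which needs the L-space hypothesis) to force the second case back into the first, and finally appeals to the classification of Montesinos links for uniqueness. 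To repair your argument you would need, at minimum, to treat the $\bar\tau=\mathrm{id}$ case separately and bring in the L-space condition to rule out torus-knot-type branch sets; at that point you are essentially reproving the cited results.
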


\begin{proof}
It was shown in~\cite{Montesinos} that any small Seifert fibered space (without the L-space condition or the condition on the fundamental group) arises as the double branched cover of a Montesinos link.

To establish uniqueness, we suppose that $M$ is the double branched cover of a link $L$. Since $M$ has infinite fundamental group, Proposition 3.3 in~\cite{Mo17} implies that $L$ is either a Montesinos link or a link with Seifert fibered complement. The main result of~\cite{ADE_link} shows that if the double branched cover of a link $L$ with Seifert fibered complement is an L-space, then $L$ also admits a description as a Montesinos link. However, from the proof of Proposition 3.3 in~\cite{Mo17} (see Claim 3.6), it follows that the description of $M$ as the double branched cover of a Montesinos link is unique.  
\end{proof}

\begin{lem}\label{lem:dbcgivingunionofknotexteriors}
	Let $M$ be a $3$-manifold that has a JSJ decomposition into the exteriors of two non-trivial, non-satellite knots in $S^3$, each with unique strong inversions. If $M$ is the double branched cover of knots $L_1$ and $L_2$ in $S^3$, then $L_1$ and $L_2$ are mutants of each other.
    In particular, $L_1$ is thin if and only if $L_2$ is thin.
\end{lem}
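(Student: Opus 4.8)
The plan is to reconstruct $L_1$ and $L_2$ from the equivariant structure of $M$ and then to show that the only remaining freedom is a mutation along a Conway sphere. Write the JSJ torus of $M$ as $T$, so that $M = X_K \cup_T X_{K'}$ where $X_K,X_{K'}$ are the exteriors of the non-trivial, non-satellite knots $K,K'$. Since neither knot is a satellite, each of $X_K,X_{K'}$ is hyperbolic or a torus-knot exterior, hence atoroidal with finite mapping class group, and $T$ is the unique JSJ torus of $M$. For $i=1,2$ an identification $M\cong \Sigma_2(L_i)$ supplies a covering involution $\iota_i$ of $M$ with $M/\iota_i=S^3$ whose fixed-point set is a knot $\widetilde L_i$ mapping homeomorphically onto $L_i$. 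The first step is to invoke the equivariant torus theorem (equivalently, to apply the characteristic Conway-sphere/torus decomposition to the orbifold $(S^3,L_i)$ and lift it) so that, after an isotopy, $\iota_i(T)=T$.

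The main case is that $\iota_i$ preserves each side of $T$, so $\iota_i|_T$ is an orientation-preserving involution of the torus. I would first rule out that $\iota_i|_T$ is free: otherwise $T/\iota_i|_T$ is a torus in $S^3$ disjoint from $L_i$, bounding a solid torus whose preimage in $M$ is again a solid torus bounded by $T$, contradicting incompressibility of $T$. Hence $\iota_i|_T$ is conjugate to the elliptic involution with exactly four fixed points, so $\widetilde L_i$ meets $T$ in four points and each JSJ piece in two arcs. Then $\iota_i|_{X_K}$ is an orientation-preserving involution of the atoroidal manifold $X_K$ with $1$-dimensional fixed set; combining Mostow rigidity (so $\iota_i|_{X_K}$ is conjugate to an isometry, with the analogous statement on a Seifert piece), the fact that every self-homeomorphism of $X_K$ preserves the meridian slope (Gordon--Luecke), and the uniqueness hypothesis, one concludes that $\iota_i|_{X_K}$ is isotopic to the restriction to $X_K$ of the unique strong inversion of $K$; likewise on $X_{K'}$. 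In particular the quotient $X_K/\iota_i$ is, independently of $i$, the canonical tangle exterior $\tau_K=(B^3,\text{two arcs})$ of $K$ from Section~\ref{sec:Kappa}, and $X_{K'}/\iota_i=\tau_{K'}$.

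Thus the Conway sphere $\bar T=T/\iota_i\subset S^3$ decomposes both $L_1$ and $L_2$ into the \emph{same} pair of tangles, $L_i=\tau_K\cup_{g_i}\tau_{K'}$, and only the gluing homeomorphism $g_i$ of the four-punctured sphere varies. To compare $g_1$ and $g_2$ I would use that $M=X_K\cup_{\widetilde g_1}X_{K'}=X_K\cup_{\widetilde g_2}X_{K'}$, with $\widetilde g_i$ a lift of $g_i$ to the branched double cover $T$ of the four-punctured sphere, and that the JSJ gluing of $M$ is well-defined only up to the boundary restrictions of self-homeomorphisms of the pieces. Each such restriction lies in a finite subgroup of $\mathrm{GL}_2(\Z)$ fixing the relevant meridian, hence in a Klein four-group of reflections; moreover, being equivariant with respect to the (unique, hence central in the mapping class group) strong inversions, these homeomorphisms descend to symmetries of the tangles $\tau_K,\tau_{K'}$. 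Pushing the relation for $\widetilde g_2\widetilde g_1^{-1}$ down through the $2$-fold branched cover $T\to S^2$, the key point is that the kernel of the resulting map from the mapping class group of the four-punctured sphere to $\mathrm{PGL}_2(\Z)$ is exactly the Klein four-group generated by the three $\pi$-rotations of the four-punctured sphere --- which is precisely why mutation leaves the double branched cover unchanged. One then reads off that $g_2$ is obtained from $g_1$ by symmetries of $\tau_K$ and $\tau_{K'}$ (which leave the resulting link unchanged) followed by a mutation; hence $L_1$ and $L_2$ are mutants. The remaining case, in which $\iota_i$ or the self-homeomorphism matching the two JSJ descriptions interchanges the two pieces, forces $X_K\cong X_{K'}$ and hence $K=\pm K'$ by Gordon--Luecke, and is dispatched by the same bookkeeping (regluing by the inverse tangle amounts to interchanging the two sides and does not change the mutant class).

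I expect the main obstacle to be the last two steps taken together: identifying the regluing ambiguity precisely with mutation --- a Birman--Hilden-type computation relating the mapping class group of the four-punctured sphere, $\mathrm{PGL}_2(\Z)$, and the meridian-preserving boundary actions of the finite mapping class groups of $X_K$ and $X_{K'}$ --- while simultaneously keeping track of the possible piece-swap. The final assertion is then immediate: Khovanov homology with $\Z_2$-coefficients, and hence Khovanov width, is a mutation invariant (Bloom; Wehrli), so $L_1$ is thin if and only if $L_2$ is thin.
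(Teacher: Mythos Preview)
Your overall strategy mirrors the paper's: make the JSJ torus $T$ equivariant, rule out the possibility that the fixed set misses $T$, and then use uniqueness of the tangle quotients $\tau_K,\tau_{K'}$ to conclude that any two branch sets differ by mutation. Your Birman--Hilden bookkeeping in the last step is in fact more detailed than the paper's rather terse ``it follows that any pair of branching sets for $M$ differ by mutation,'' and is on the right track.

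However, there is a genuine gap in the step where you rule out that $\iota_i|_T$ is free. You assert that the solid torus $V\subset S^3$ bounded by $T_0=T/\iota_i$ has preimage in $M$ equal to a solid torus. This is only true when the covering $M\to S^3$ is \emph{unbranched} over $V$, i.e.\ when $L_i$ lies on the knot-exterior side of $T_0$. If instead $L_i\subset V$ (which a priori is possible: $L_i$ is disjoint from $T_0$ and must lie on one side of it), then the preimage of $V$ is the \emph{branched} double cover of $(V,L_i)$, which is one of your pieces $X_K$ or $X_{K'}$ --- certainly not a solid torus, and no contradiction arises from your argument. In that situation the other side $E$ of $T_0$ is a nontrivial knot exterior, and what you actually need to exclude is that a knot exterior in $S^3$ can be a nontrivial unbranched double cover of another nontrivial knot exterior. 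The paper handles precisely this case by invoking a theorem of Gonz\'alez-Acu\~na--Whitten (together with the Poincar\'e conjecture) to deduce that the base knot would then admit an $\R P^3$ surgery, and then the result of Kronheimer--Mrowka--Ozsv\'ath--Szab\'o that only the unknot has such a surgery. Some nontrivial input of this sort appears to be necessary here; your argument as written does not supply it.
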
		

\begin{proof}
By assumption $M$ has a JSJ decomposition $M=M_1 \cup_T M_2$ where $M_i = S^3 \cut \nbhd(K_i)$ for some non-trivial knots $K_i$, $i=1,2$ whose complements are atoroidal. Under these hypotheses $T=M_1 \cap M_2$ is the unique essential torus in $M$ up to isotopy. 

Suppose $M$ is the double branched cover of a knot $L$ in $S^3$. 
From this double branched cover, $M$ inherits an involution $\tau$. Since $M$ is not a Seifert fibered space, the Equivariant Torus Theorem \cite[Corollary~4.6]{equivarianttorustheorem} shows $M$ contains an incompressible torus $T'$ such that either $\tau(T')=T'$ or $\tau(T') \cap T' = \emptyset$. Since $M$ contains a unique essential torus up to isotopy we may assume that $T'=T$. Furthermore, the uniqueness of $T$ implies that $\tau(T)=T$.  

Next, we show $T$ cannot be disjoint from the fixed set of $\tau$. Indeed, if this were the case, then $T$ would be the lift of a torus $T_0$ in $S^3$ disjoint from $L$. As a torus in $S^3$, $T_0$ must bound the exterior $E$ of a knot $K$ to one side and a solid torus $V$ to the other. Then, say, $M_1$ is the lift of $E$ and $M_2$ is the lift of $V$. Since the boundary of $M_2$ is incompressible in $M$, we see that $V$ must contain the branching set $L$. Since the boundary of $M_1$ is incompressible in $M$ and $M_1 \rightarrow E$ is a covering, $E$ is not a solid torus.
However~\cite[Theorem 3.4]{GAW-imbeddings} together with the resolution of the Poincar\'e Conjecture~\cite{perelman1,perelman2} shows that some $(2/n)$-surgery on $K$ must yield $\R P^3$.   Yet only the unknot admits a Dehn surgery to $\R P^3$ \cite{KMOS}, contradicting that $E$ is not a solid torus. This completes the argument showing that $T$ and the fixed set of $\tau$ are not disjoint.
 
We conclude that $T$ intersects the fixed set of $\tau$. Then $T$ is the lift of an essential Conway sphere that splits $L$ into two tangles whose double branched covers are the knot exteriors $M_1$ and $M_2$. Since $K_1$ and $K_2$ admit, by assumption, unique strong inversions, the branching sets for $M_1$ and $M_2$ are unique, it follows that any pair of branching sets for $M$ differ by mutation. 

Since Khovanov homology with $\Z_2$-coefficients is preserved under mutation~\cite{Mutation_KH}, it follows that all branching sets of $M$ have the same thinness status.
\end{proof}

\begin{lem}\label{lem:excepto9_30634}
	No exceptional slope of $K_1$ or $K_2$ is thin.
\end{lem}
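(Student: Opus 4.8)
The plan is to reduce the statement to the two uniqueness results just established, Lemmas~\ref{lem:sfs_branching} and~\ref{lem:dbcgivingunionofknotexteriors}. Since $K_1$ and $K_2$ are hyperbolic, each has only finitely many exceptional slopes, and as these are census knots the exceptional slopes can be enumerated explicitly: using SnapPy one lists the non-hyperbolic Dehn fillings of the complements of $K_1$ and $K_2$, and using Regina one identifies each resulting manifold together with its JSJ decomposition. No exceptional filling is reducible (this can be read off the enumeration; alternatively, a reducible surgery on an L-space knot forces the knot to be a cable knot, whereas $K_1$ and $K_2$ are hyperbolic), so every exceptional filling $K(r)$ is toroidal or a small Seifert fibered space. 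For each such $r$ it then suffices to show that the branching set $T(r)$ from Section~\ref{sec:QAKhovanov} is, up to mutation, the only link in $S^3$ whose double branched cover is $K(r)$: by Proposition~\ref{prop:str_inv} the link $T(r)$ has Khovanov width $2$, and $\Z_2$-Khovanov homology is a mutation invariant, so this forces $K(r)$ not to be the double branched cover of a thin link.

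For the Seifert fibered fillings I would verify from the enumeration that each is a small Seifert fibered \emph{L-space} with \emph{infinite} fundamental group; concretely, this amounts to checking that no exceptional filling of $K_1$ or $K_2$ is a lens space or other spherical space form, a point that genuinely must be checked since lens spaces are double branched covers of alternating, hence thin, $2$-bridge links. Granting this, Lemma~\ref{lem:sfs_branching} gives at once that $T(r)$ is the unique branching set. For the toroidal fillings I would read the JSJ decomposition off Regina and check that it matches the hypothesis of Lemma~\ref{lem:dbcgivingunionofknotexteriors}: a single essential torus splitting $K(r)$ into the exteriors of two non-trivial, non-satellite knots in $S^3$, each with a unique strong inversion. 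Identifying the two JSJ pieces with concrete knot exteriors (torus-knot exteriors, or exteriors of small census knots) and then confirming via SnapPy that each admits a unique strong inversion is the step I expect to require the most care, and is the main obstacle; once it is done, Lemma~\ref{lem:dbcgivingunionofknotexteriors} shows all branching sets of $K(r)$ are mutually mutant, which again pins down the thinness of $K(r)$ via that of $T(r)$. (One should also note that the branching sets arising here are knots, or, where they are links, that the argument of Lemma~\ref{lem:dbcgivingunionofknotexteriors} applies verbatim.)

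Combining the cases — no reducible fillings, Seifert fibered fillings handled by Lemma~\ref{lem:sfs_branching}, toroidal fillings handled by Lemma~\ref{lem:dbcgivingunionofknotexteriors} — every exceptional filling $K(r)$ has $T(r)$ as its unique branching set up to mutation, and $T(r)$ is not thin; hence no exceptional slope of $K_1$ or $K_2$ is thin.
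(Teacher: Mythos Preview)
Your proposal is correct and follows essentially the same approach as the paper. The paper carries out exactly the computation you outline: from Dunfield's census data the exceptional slopes are $18,19$ for $K_1$ and $14$ for $K_2$; slopes $18$ and $14$ yield small Seifert fibered L-spaces with infinite fundamental group (handled by Lemma~\ref{lem:sfs_branching}), while slope $19$ yields a graph manifold glued from two trefoil exteriors (handled by Lemma~\ref{lem:dbcgivingunionofknotexteriors}), with the observation that $H_1$ has odd order forcing any branching set to be a knot.
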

			
\begin{proof}
From Dunfield's list~\cite{Du18}, we read off that the knot $K_1$ has exactly two exceptional slopes. These are the slope $r=18$, yielding the Seifert fibered space
	\begin{equation*}
		SFS[S2 :(2,1)(7,2)(8,-5)],
	\end{equation*}
and the slope $r=19$, yielding the graph manifold
	\begin{equation*}
		G= SFS[D :(2,1)(3,1)] \cup_m SFS[D :(2,1)(3,1)],\,\,m=\begin{pmatrix}
						2 & 1\\
						1 & 1
		\end{pmatrix},
	\end{equation*}
while the slope $r=14$ is the single exceptional slope of $K_2$ yielding the Seifert fibered space \[SFS [S2: (2,1) (8,3) (9,-7)].\footnote{Strictly speaking the orientations on these manifolds should be reversed, since the knots $K_1$ and $K_2$ are mirrors of the census knots $t09847$ and $o9\_30634$. That is, if $M$ is a manifold obtained by an exceptional surgery on one of these two census knots, then it is $-M$ that arises as an exceptional surgery on $K_1$ or $K_2$. However, this is irrelevant for the argument that the exceptional slopes are not thin.}\]  
That the two Seifert fibered spaces are not double branched covers of a thin link follows from Lemma~\ref{lem:sfs_branching}. Their unique descriptions as double branched covers are over links in $S^3$ necessarily have branching set of the form $T(r)$. However, the $T(r)$ are already known not to be thin by Proposition~\ref{prop:str_inv}.

The above graph manifold $G$ is the double branched
cover of the knot $T(19)$, which is not thin by Proposition~\ref{prop:str_inv}. As $H_1(G;\Z)$ has odd order every branching set for $G$ in $S^3$ is necessarily a knot \cite[Chapter~9]{lickorish}.
As $G$ is the result of gluing two trefoil knot exteriors along their boundaries, Lemma~\ref{lem:dbcgivingunionofknotexteriors} implies that no branching set for $G$ in $S^3$ can be thin.
\end{proof}

\begin{rem}
    In fact, it also follows that $G$ is the double branched cover of a unique link.
    Indeed, $G$ is the double branched cover of $T(19)=K12n120$. It is not hard to check that $K12n120$ has a unique Conway sphere and that any mutation along that Conway sphere preserves this knot.
\end{rem}

\subsection{Symmetry-exceptional slopes}
Finally, we analyze the symmetry-excep\-tion\-al slopes. That there are only finitely many such slopes follows from Thurston's hyperbolic Dehn filling theorem. Recent work of Futer--Purcell--Schleimer allows us to bound the length of such symmetry-exceptional slopes \cite{FPS19}.

\begin{lem}\label{lem:slope}
Let $K$ be a hyperbolic knot in $S^3$. Then any symmetry-exceptional slope $r$ of $K$ satisfies
\[
\widehat{L}(r)\leq \max\left\{10.1\,,\,\sqrt{\frac{2\pi}{\operatorname{sys}(K)}+58}\right\},
\]
where $\operatorname{sys}(K)$ is the length of the shortest geodesic of $S^3\setminus K$ and $\widehat{L}(r)$ denotes the normalized length of $r$.
\end{lem}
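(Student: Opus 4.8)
The plan is to argue by contradiction, exploiting the fact that a large normalized length forces the core of the Dehn filling to be the unique shortest geodesic. Suppose $r$ is a symmetry-exceptional slope, so $K(r)$ is hyperbolic and $\operatorname{Isom}(K(r))$ is strictly larger than the symmetry group of $K$, i.e.\ than $\operatorname{Isom}(S^3\setminus K)$. Let $\gamma_r\subset K(r)$ be the core of the filling solid torus. For $\widehat L(r)$ large, $\gamma_r$ is a closed geodesic, and drilling it out yields a complete hyperbolic manifold diffeomorphic to $S^3\setminus K$, hence isometric to it by Mostow rigidity. The crux is to show that once $\widehat L(r)$ exceeds the bound in the statement, $\gamma_r$ is the \emph{unique} shortest closed geodesic of $K(r)$. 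Granting this, $\gamma_r$ is preserved setwise by $\operatorname{Isom}(K(r))$, so restriction to $K(r)\setminus\gamma_r$ — equipped with its unique complete hyperbolic metric — defines an injective homomorphism $\operatorname{Isom}(K(r))\hookrightarrow\operatorname{Isom}(K(r)\setminus\gamma_r)\cong\operatorname{Isom}(S^3\setminus K)$, contradicting that $r$ is symmetry-exceptional.

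The length comparison is where I would invoke the effective drilling and filling estimates of Futer--Purcell--Schleimer \cite{FPS19}. These apply once $\widehat L(r)$ exceeds an explicit threshold (the source of the constant $10.1$), and then provide: (i) a Neumann--Zagier-type upper bound for the core length of the shape $\operatorname{len}_{K(r)}(\gamma_r)\le 2\pi/(\widehat L(r)^2-c_1)$ for an explicit constant $c_1$; (ii) a lower bound, growing with $\widehat L(r)$, on the radius of an embedded tube about $\gamma_r$; and (iii) a bilipschitz diffeomorphism with constant $1+O(\widehat L(r)^{-2})$ between the thick part of $S^3\setminus K$ and the complement in $K(r)$ of that tube. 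Combining (ii) and (iii), every closed geodesic $\delta\neq\gamma_r$ of $K(r)$ has length at least $\operatorname{sys}(K)/\bigl(1+O(\widehat L(r)^{-2})\bigr)$: a geodesic meeting the tube about $\gamma_r$ is long because the tube is deep, while one disjoint from it is, under the bilipschitz map, an essential non-peripheral closed curve of $S^3\setminus K$ and so no shorter than $\operatorname{sys}(K)$ up to the bilipschitz factor.

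Putting the two estimates together, $\gamma_r$ is the unique shortest geodesic of $K(r)$ as soon as
\[
\frac{2\pi}{\widehat L(r)^2-c_1}\ <\ \frac{\operatorname{sys}(K)}{1+O(\widehat L(r)^{-2})},
\]
and I would check that, for $\widehat L(r)\ge 10.1$, this is implied by the single clean inequality $\widehat L(r)^2> 2\pi/\operatorname{sys}(K)+58$, the additive constant $58$ being chosen to absorb $c_1$ and the bilipschitz error. Reading the contrapositive of the first paragraph: if $r$ is symmetry-exceptional, then at least one of $\widehat L(r)\ge 10.1$ and $\widehat L(r)^2>2\pi/\operatorname{sys}(K)+58$ must fail, which is exactly $\widehat L(r)\le \max\{10.1,\sqrt{2\pi/\operatorname{sys}(K)+58}\}$.

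I expect the only delicate part to be the bookkeeping in the previous paragraph: pinning down the threshold as $10.1$, extracting the explicit constant $c_1$ and the bilipschitz bound from the (rather technical) statements in \cite{FPS19}, and verifying that with those inputs the comparison collapses to the displayed inequality with additive constant $58$. The surrounding topology — that drilling $\gamma_r$ returns $S^3\setminus K$, that a uniquely-shortest geodesic is preserved by all isometries, and that restriction to its complement is an injection of isometry groups — is routine and standard.
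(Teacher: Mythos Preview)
Your proposal is correct and follows essentially the same route as the paper: invoke the Futer--Purcell--Schleimer estimates to show that once $\widehat L(r)$ exceeds the stated bound the core $\gamma_r$ is the (unique) shortest geodesic of $K(r)$, then argue that every isometry of $K(r)$ must fix $\gamma_r$ setwise and hence restrict to a symmetry of $S^3\setminus K$, contradicting symmetry-exceptionality. The paper's proof is simply more terse---it cites \cite[Theorem~7.28]{FPS19} directly for the ``core is shortest geodesic'' conclusion (with a pointer to \cite{BKM_QA} for the bookkeeping you sketch) rather than unpacking the bilipschitz and tube-radius estimates as you do.
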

\begin{proof}
If the normalized length of $r$ exceeds the stated bound, then work of Futer--Purcell--Schleimer~\cite[Theorem 7.28]{FPS19} says that $K(r)$ is a hyperbolic manifold and the core $\gamma$ of the surgery torus is the shortest geodesic in $K(r)$, for details we refer to Theorem~2.4 in~\cite{BKM_QA}. Since every element in the symmetry group of $K(r)$ is isotopic to an isometry, we may assume that it maps $\gamma$ to itself and thus restricts to a symmetry of the knot complement of $K$. 
\end{proof}

Now let $M$ be a closed $3$-manifold with a smooth involution $\Phi\colon M\rightarrow M$. 
Then we can take the quotient under $\Phi$, i.e.\ we identify points $p$ and $\Phi(p)$. The quotient map $M\rightarrow M/\Phi$ will be a $2$-fold branched covering map with branching set $B_\Phi$.

\begin{lem}\label{lem:dbc_lemma}
Suppose $M$ is a closed hyperbolic $3$-manifold with two smooth involutions $\Phi_1,\Phi_2\colon M\rightarrow M$ that are isotopic as diffeomorphisms. Then $(M/\Phi_1,B_{\Phi_1})$ is diffeomorphic to $(M/\Phi_2,B_{\Phi_2})$.
\end{lem}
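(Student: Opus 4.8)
The plan is to strengthen the hypothesis that $\Phi_1$ and $\Phi_2$ are isotopic to the assertion that they are \emph{conjugate} in $\mathrm{Diff}(M)$, and then to push a conjugating diffeomorphism down to the quotients. The easy half of this is the observation I would record first: if $g\in\mathrm{Diff}(M)$ satisfies $g\Phi_2 g^{-1}=\Phi_1$, then $g$ carries $\Phi_2$-orbits to $\Phi_1$-orbits and $\mathrm{Fix}(\Phi_2)$ onto $\mathrm{Fix}(\Phi_1)$, so it descends to a diffeomorphism of pairs $(M/\Phi_2,B_{\Phi_2})\to(M/\Phi_1,B_{\Phi_1})$. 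Thus everything reduces to producing such a $g$.

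To produce $g$, I would first appeal to the geometrization of finite group actions on closed hyperbolic $3$-manifolds (by the orbifold theorem, or by Dinkelbach--Leeb, building on Perelman's proof of the Geometrization Conjecture \cite{perelman1,perelman2}): each subgroup $\langle\Phi_i\rangle\cong\Z_2$ of $\mathrm{Diff}(M)$ is conjugate, by some $h_i\in\mathrm{Diff}(M)$, to a group of isometries of the hyperbolic metric on $M$, so that $\psi_i:=h_i\Phi_i h_i^{-1}$ is an isometric involution. Next I would invoke Mostow rigidity in the packaged form already used above, namely that the natural map from $\mathrm{Isom}(M)$ to the mapping class group $\mathrm{MCG}(M)=\pi_0\mathrm{Diff}(M)$ is an isomorphism \cite{Gabai_DiffMCG,Gabai_DiffMCG2}. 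Since $\Phi_1$ and $\Phi_2$ are isotopic they determine a single class $[\Phi]\in\mathrm{MCG}(M)$, whence $[\psi_i]=[h_i][\Phi][h_i]^{-1}$, so $[\psi_1]$ and $[\psi_2]$ are conjugate in $\mathrm{MCG}(M)$ and therefore $\psi_1$ and $\psi_2$ are conjugate inside the finite group $\mathrm{Isom}(M)$; say $\psi_1=a\psi_2 a^{-1}$ with $a$ an isometry. Composing the three conjugacies $\Phi_1\sim\psi_1\sim\psi_2\sim\Phi_2$ gives $g=h_1^{-1}a\,h_2$ with $g\Phi_2 g^{-1}=\Phi_1$, and the first paragraph completes the proof.

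The crux is the first ingredient: that a finite-order diffeomorphism of a closed hyperbolic $3$-manifold is standard, i.e.\ conjugate to an isometry. This is where geometrization genuinely enters. When $\Phi_i$ acts freely one can avoid it, since $M/\Phi_i$ is then already a closed hyperbolic manifold and the extension $1\to\pi_1(M)\to\pi_1(M/\Phi_i)\to\Z_2\to1$ is determined by its outer action together with a class in $H^2(\Z_2;Z(\pi_1 M))$, which vanishes because the fundamental group of a closed hyperbolic $3$-manifold has trivial center; Mostow--Prasad rigidity then identifies the two quotients directly. For a non-free $\Phi_i$, however, the corresponding statement concerns the quotient \emph{orbifold} $M/\Phi_i$ and seems to require the orbifold theorem, so I would invoke it rather than try to circumvent it.
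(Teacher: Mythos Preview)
Your proof is correct and follows essentially the same route as the paper's: both invoke geometrization of finite group actions (the paper via the orbifold theorem applied to $M/\Phi_i$) to make each $\Phi_i$ isometric, then use Mostow rigidity to upgrade ``isotopic'' to ``conjugate'' and descend to the quotients. Your version is simply more explicit about the conjugacy chain and about why the non-free case genuinely needs orbifold geometrization.
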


\begin{proof}
  When we see $M/\Phi_i$ as an orbifold $\mathcal{O}_i$, then the quotient map $M\rightarrow \mathcal O_i$ is a $2$-fold orbifold covering map. Since $M$ is hyperbolic, the orbifold universal covering of $\mathcal O_i$ is also hyperbolic, and thus $\mathcal O_i$ is irreducible and atoroidal. Thus the orbifold geometrization theorem~\cite[Corollary~1.2]{BLP_annals} 
  implies that $\mathcal O_i$ is hyperbolic and the deck transformation $\Phi_i$ is an isometry for some hyperbolic metric $g_i$ on $M$. Then Mostow rigidity implies that $\Phi_1$ and $\Phi_2$ are conjugated by an isometry. But this implies that $(M/\Phi_1,B_{\Phi_1})$ is diffeomorphic to $(M/\Phi_2,B_{\Phi_2})$.
\end{proof}

\begin{lem}\label{lem:symexcepto9_30634}
	No symmetry-exceptional slope of $K_1$ or $K_2$ is thin.
\end{lem}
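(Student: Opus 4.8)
The plan is to turn the statement into a finite, explicit computation. By Lemma~\ref{lem:slope}, a symmetry-exceptional slope $r$ of $K=K_i$ satisfies $\widehat L(r)\le\max\{10.1,\sqrt{2\pi/\operatorname{sys}(K)+58}\}$. First I would compute $\operatorname{sys}(K_1)$ and $\operatorname{sys}(K_2)$ in SnapPy and check that in both cases $\sqrt{2\pi/\operatorname{sys}(K)+58}<10.1$, so that the bound is simply $\widehat L(r)\le 10.1$. Since the normalized length of the slope $p/q$ grows with $\max(|p|,|q|)$ (controlled by the cusp shape of $S^3\setminus K$), this leaves only finitely many candidate slopes, which SnapPy will enumerate.

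Second, I would go through this finite list and use SnapPy to keep only the slopes $r$ for which $K(r)$ is actually hyperbolic with isometry group strictly larger than the $\Z_2$ generated by the strong inversion of $K$; these are exactly the symmetry-exceptional slopes. Every candidate that fails this test is either non-hyperbolic, and so is covered by Lemma~\ref{lem:excepto9_30634}, or carries only the expected $\Z_2$-symmetry, so that its unique branched-cover description has branching set $T(r)$, which is not thin by Proposition~\ref{prop:str_inv}.

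Third, for each remaining slope $r$ I would enumerate all ways in which the closed hyperbolic manifold $K(r)$ is a double branched cover of $S^3$. Arguing as in the proof of Lemma~\ref{lem:dbc_lemma} --- via the orbifold geometrization theorem and Mostow rigidity --- two isotopic involutions of $K(r)$ are conjugate by an isometry, and $\operatorname{MCG}(K(r))\cong\operatorname{Isom}(K(r))$ is finite; hence the branched-cover descriptions of $K(r)$ over $S^3$ are indexed by (at most) the conjugacy classes of order-two isometries $\tau$ whose quotient orbifold has underlying space $S^3$. I would list these in SnapPy, pass to the quotient orbifolds, identify the branch link $L$ for each one whose underlying space is $S^3$, and compute the reduced Khovanov homology of each such $L$ with KnotJob. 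One of them is $T(r)$, which has width $2$; the remaining finitely many branch links must then be checked to be non-thin as well, completing the proof.

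The main obstacle is this last step: once the systole bound makes the candidate set finite, one still has to correctly enumerate the extra involutions of each symmetry-exceptional $K(r)$ and identify the resulting branch links, since a priori such an extra branch link need not be a mutant of $T(r)$ and could conceivably be thin --- so the argument genuinely relies on computing (or at least bounding the width of) the Khovanov homology of each link that occurs. A lesser concern is that $\operatorname{sys}(K_i)$ might be too small for the simplification of the bound in Lemma~\ref{lem:slope}; the systole computation should rule this out. If the isometry-group computations show that no candidate slope is symmetry-exceptional at all, the lemma follows immediately.
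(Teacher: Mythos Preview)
Your plan is correct and follows essentially the same route as the paper: use Lemma~\ref{lem:slope} together with verified SnapPy computations to produce a finite list of symmetry-exceptional slopes, then for each such $K_i(r)$ enumerate the conjugacy classes of orientation-preserving involutions and examine the resulting quotient orbifolds. The one point where your expectations diverge from the actual outcome is the final step: you anticipate having to compute the Khovanov homology of several branch links in $S^3$ beyond $T(r)$, but the paper's computation shows that for every symmetry-exceptional slope the symmetry group is $\Z_2^2$, and the two non-trivial involutions other than the one coming from the strong inversion have quotient a lens space rather than $S^3$ (recorded in Tables~\ref{tab:symexct09847} and~\ref{tab:symexco9_30634}). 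Hence the only description of $K_i(r)$ as a double branched cover of $S^3$ is the one with branching set $T(r)$, and no further Khovanov computations are needed. Your filtering step ``identify the branch link for each one whose underlying space is $S^3$'' would discover this, so the plan succeeds, just more easily than you feared.
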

			
\begin{proof}
The verified functions in SnapPy together with the bound from Lemma~\ref{lem:slope} show that the symmetry-exceptional slopes of $K_1$ are $16$, $20$, and $21$; and of $K_2$ are $12$, $15$, and $16$. For each symmetry-exceptional slope $r$, the hyperbolic manifold $K_i(r)$ has symmetry group $\Z_2^2$. 

First, we use SnapPy to check that any exceptional symmetry is orientation-preserving, and thus by Lemma~\ref{lem:dbc_lemma} each exceptional symmetry results in a unique description of $K(r)$ as double branched cover over a link in some quotient manifold. In order to verify that $r$ is not a thin slope, it is enough to demonstrate for each of these descriptions that the quotient manifold is not $S^3$. For each symmetry exceptional slope, these descriptions as double branched covers are recorded in Tables~\ref{tab:symexct09847} and~\ref{tab:symexco9_30634}. 
\end{proof}

\begin{notation}\label{not:table_notation}
Tables~\ref{tab:symexct09847} and~\ref{tab:symexco9_30634}
include surgery descriptions of branching sets for quotients of orientation preserving involutions of manifolds. When the branching set is not contained in $S^3$, these are presented by link in the Hoste--Thistlethwaite tabulation as recorded in SnapPy with a list of surgery coefficients. These links come with an ordering on the components, and the surgery information is presented in a list-without-commas of an ordered pair $(p_i, q_i)$ indicating $p_i/q_i$ surgery on the $i$th component. The ordered pair $(0,0)$ is taken to mean that no surgery is performed on the component and it forms part of the branching set. For example, $L13n9355(0,0)(-5,2)(0,0)$ describes a link of two components arising from the first and third component of $L11n350$ in the manifold obtained by $-5/2$ surgery on the second component.
\end{notation}

\begin{rem}
    We conclude the section by explaining the strategy used to find the descriptions of the symmetry-exceptional surgeries as branched double covers. 
    For any $r$ we have a description of $K(r)$ as double branched covers of the $T(r)$. 
    The remaining descriptions of the $K(r)$ were found by a brute force search. Given a link $L$ in the HTW table, we performed Dehn surgery on some subset of its components, leaving one or two components unfilled. This gives a pair $(M,B)$ where $M$ is the manifold obtained from the surgered components and $B$ is the link in $M$ formed by the unfilled components of $L$. Then we take all double covers of $M$ branched along $B$\footnote{Note that in this setting, there might be more than one or no double branched cover of $B$ depending on the algebraic topology of the exterior of $B$.} and check if it is diffeomorphic to $K(r)$. This identified for any orientation-preserving exceptional symmetry a link in a lens space whose double branched covering is diffeomorphic to a symmetry-exceptional filling of $K_1$ or $K_2$. 
\end{rem}

\begin{table}[htbp]
{\small
\caption{Branching sets of symmetry-exceptional slopes of $K_1$.}
\label{tab:symexct09847}
\begin{tabular}{@{}
>{\raggedright}p{0.1\linewidth}
>{\raggedright}p{0.15\linewidth}	
>{\raggedright}p{0.21\linewidth}
>{\raggedright\arraybackslash}p{0.31\linewidth}@{}}
\toprule
slope  & symmetry &  quotient manifold& branching set \\
\midrule

\multirow{3}{*}{$16$}&
\multirow{3}{*}{\begin{tabular}{@{}>{\raggedright}p{0.8\linewidth}>{\raggedleft}p{0.1\linewidth}}		$\Z_2^2$ & {\Vast\{} \end{tabular}}
&$S^3$& $L14n31171=T(16)$  \\ 
&&$L(2,1)$&$L12n1082(-2,1)(0,0)$\\ 
& 	&$-L(8,3)$&$L10n92(0,0)(-7,4)(-8,3)$\\ 
\midrule
\multirow{3}{*}{$20$}&
\multirow{3}{*}{\begin{tabular}{@{}>{\raggedright}p{0.8\linewidth}>{\raggedleft}p{0.1\linewidth}}		$\Z_2^2$ & {\Vast\{} \end{tabular}}
&$S^3$& $L14n24290=T(20)$  \\ 
& 	 &$L(5,2)$&$L13n9355(0,0)(-5,2)(0,0)$\\ 
& 	&$L(2,1)$&$L13n7290(0,0)(-2,1)(0,0)$\\
\midrule
\multirow{3}{*}{$21$}&
\multirow{3}{*}{\begin{tabular}{@{}>{\raggedright}p{0.8\linewidth}>{\raggedleft}p{0.1\linewidth}}		$\Z_2^2$ & {\Vast\{} \end{tabular}}
&$S^3$&$K15n88871=T(21)$\\
& &$L(3,1)$& $L13n5917(-3, 1)(0,0)$  \\ 
& 	&$-L(7,5)$&$L11n294(-2,3)(-5,1)(0,0)$\\
\bottomrule
\end{tabular}
}
\end{table}
			
\begin{table}[htbp]
{\small
\caption{Branching sets of symmetry-exceptional slopes of~$K_2$.}
\label{tab:symexco9_30634}
\begin{tabular}{@{}
>{\raggedright}p{0.09\linewidth}
>{\raggedright}p{0.14\linewidth}	
>{\raggedright}p{0.20\linewidth}
>{\raggedright\arraybackslash}p{0.35\linewidth}@{}}
\toprule
slope  & symmetry &  quotient manifold& branching set \\
\midrule
\multirow{3}{*}{$12$}&
\multirow{3}{*}{\begin{tabular}{@{}>{\raggedright}p{0.8\linewidth}>{\raggedleft}p{0.1\linewidth}}		$\Z_2^2$ & {\Vast\{} \end{tabular}}
& $S^3$& $T(12)$  \\ 
& 	 &$L(3,2)$&$L12n2163(0,0)(-2,1)(-2,1)(0,0)$\\ 
& 	&$L(2,1)$&$L13n9873(0,0)(-2,1)(0,0)$\\
\midrule
\multirow{3}{*}{$15$}&
\multirow{3}{*}{\begin{tabular}{@{}>{\raggedright}p{0.8\linewidth}>{\raggedleft}p{0.1\linewidth}}		$\Z_2^2$ & {\Vast\{} \end{tabular}}
& $S^3$& $K13n2148=T(15)$  \\ 
& 	 &$L(5,1)$&$L10n45(-5,1)(0,0)$\\ 
& 	&$L(3,1)$&$L11n141(-3,1)(0,0)$\\
\midrule
\multirow{3}{*}{$16$}&
\multirow{3}{*}{\begin{tabular}{@{}>{\raggedright}p{0.8\linewidth}>{\raggedleft}p{0.1\linewidth}}		$\Z_2^2$ & {\Vast\{} \end{tabular}}
&$S^3$&$T(16)$\\
& &$L(8,3)$& $L12n1012(-8, 3)(0,0)$  \\ 
& 	&$L(2,1)$&$L14n23888(-2,1)(0,0)$\\ 
\bottomrule
\end{tabular}
}
\end{table}
			
\let\MRhref\undefined
\bibliographystyle{hamsalpha}
\bibliography{altsurg.bib}

\end{document}